\newcommand\blfootnote[1]{%
  \begingroup
  \renewcommand\thefootnote{}\footnote{#1}%
  \addtocounter{footnote}{-1}%
  \endgroup
}
\numberwithin{equation}{section}
\newtheorem{thm}{Theorem}[section]
\newtheorem{lem}[thm]{Lemma}
\newtheorem{prop}[thm]{Proposition}
\newtheorem{cor}[thm]{Corollary}
\theoremstyle{definition}
\newtheorem{ex}[thm]{Example}
\theoremstyle{remark}
\newtheorem{rem}[thm]{Remark}
\newtheorem*{acknowledgements}{Acknowledgements}
\def\Z{\mathbb{Z}}
\def\fix{\mathsf{fix}}
\def\norm{\mathsf{norm}}
\def\tran{\mathsf{tr}}
\def\trace{\mathsf{tr}}
\def\res{\mathsf{res}}
\def\mul{\mathsf{mult}}
\newcommand{\und}[1]{{\underline{#1}}}
\def\ra{\rightarrow}
\def\leq{\leqslant}
\def\geq{\geqslant}
\begin{document}
\title{Examples of \'etale extensions of Green functors}
\author{Ayelet Lindenstrauss}
\address{Mathematics Department, Indiana University, 831 East Third Street,
  Bloomington, IN 47405, USA}
\email{alindens@indiana.edu}
\author{Birgit Richter}

\address{Fachbereich Mathematik der Universit\"at Hamburg,
Bundesstra{\ss}e 55, 20146 Hamburg, Germany}
\email{birgit.richter@uni-hamburg.de}

\author{Foling Zou}
\address{Academy of Mathematics and Systems Science, Chinese Academy
of Science,  
No. 55 Zhongguancun East Road, Beijing 100190,  China}
\email{zoufoling@amss.ac.cn}
\date{\today}
\maketitle

\begin{abstract}
We provide new examples of \'etale extensions of Green functors by
transferring classical examples of \'etale extensions to the
equivariant setting. Our examples  are Tambara functors,
  and we prove Green \'etaleness for them, which implies Tambara
  \'etaleness. 
We show that every $C_2$-Galois extensions of
fields gives rise to an \'etale extension of $C_2$-Green
functors.
Here we associate the constant Tambara functor to the base
field and the fix-Tambara functor to the extension. We also prove that
all $C_n$-Kummer extensions give rise to \'etale extensions for
arbitrary finite $n$. \'Etale extensions of fields induce \'etale
extension of $G$-Green functors for any finite group $G$ by passing to
the corresponding constant $G$-Tambara functors. 

\end{abstract}

\section{Introduction}

In the world of commutative rings a finitely generated commutative
$k$-algebra $A$ is \'etale if it is flat and unramified, and the
latter property means that the module of K\"ahler differentials
$\Omega^1_{A|k}$ is trivial. Typical examples include localizations
and Galois extensions. Flat extensions with vanishing K\"ahler
differentials are often called formally \'etale. The module of
K\"ahler differentials can be identified with the first Hochschild
homology group of $A$ over $k$ and also with 
$I/I^2$, where $I$ is the kernel of the multiplication map $A
\otimes_k A \ra A$.  

In the equivariant setting, this notion turns out to be more involved:
Mike Hill defined a module of K\"ahler differentials for maps of
$G$-Tambara functors $\und{R} \ra \und{T}$ which we will denote by
$\Omega^{1,G}_{\und{T}|\und{R}}$ 
\cite[Definition 5.4]{hillaq} and he defined $\und{R} \ra \und{T}$ to
be formally \'etale, if $\und{T}$ is flat as an $\und{R}$-module and
if $\Omega^{1,G}_{\und{T}|\und{R}} =0$ \cite[Definition
5.9]{hillaq}. Flatness is defined analogously to the non-equivariant
context: $\und{T}$ is flat over $\und{R}$ if the Mackey box product
with $\und{T}$ over $\und{R}$, $\und{T} \Box_{\und{R}} (-)$, preserves
exactness. However, flatness is rarer in the equivariant context than
in the non-equivariant one, see \cite{hmq}. The definition of the
K\"ahler differentials is also slightly different: the basic object is
still the kernel of the multiplication map 
\[ \und{I} = \ker(\mul \colon \und{T} \Box_{\und{R}} \und{T} \ra \und{T})\]
but instead of quotienting out by $\und{I}^2$ one also kills the image
of norm maps that are induced by $2$-surjective maps of finite
$G$-sets. For a nice explicit description of
$\Omega^{1,G}_{\und{T}|\und{R}}$ see \cite[p.~38]{leeman}.  In all our
examples in this paper, we actually prove that $\und{I}^2=\und{I}$, so
we only use the corresponding underlying Green functor structures of
the Tambara functors involved. We still determine the full Tambara
structures of the relevant objects.

Examples of \'etale extensions are rare;  it is known that for 
a multiplicative subset
in $\und{R}$, the map $\und{R} \ra \und{R}[N^{-1}]$ is \'etale
\cite[Proposition 5.13]{hillaq}.  \blfootnote{MSC2020:
  Primary 55P91, Secondary 13B40}

The purpose of this note is to provide new families of examples of
\'etale extensions of Tambara functors. It grew out of our attempt to
understand the notion of \'etaleness in the equivariant context and to
transfer classical examples of \'etale extensions to the setting of
Tambara functors. We show that $C_2$-Galois extensions of fields $K
\subset L$ give rise to \'etale extensions of $C_2$-Tambara functors. Here
we associate the constant Tambara functor, $\und{K}^c$,  to the base
field and the fix-Tambara functor, $\und{L}^\fix$ to the extension. We
also prove that all $C_n$-Kummer extensions give rise to Tambara
\'etale extensions for arbitrary finite $n$.

\'Etale extensions of fields $K \subset L$ induce
\'etale extension of $G$-Tambara functors for any finite group
$G$. Here we consider the extension of the corresponding constant
$G$-Tambara functors, $\und{K}^c \ra \und{L}^c$.

In addition to sending a commutative $G$-ring $R$ to $\und{R}^c$ or
$\und{R}^\fix$, there is a third way of importing commutative rings
with a group action into the world of Tambara functors: There is a
left adjoint functor to the forgetful functor from $G$-Tambara
functors to $G$-rings. We investigate its properties in upcoming work
\cite{lrz}.  

It would be interesting to have
  an example of a Tambara \'etale extension which is not Green \'etale.
  We did not succeed in finding one.  
 
\begin{acknowledgements} We thank David Mehrle and Mike Hill for helpful
comments and encouragement. The authors also thank an anonymous referee
for comments which helped improve the exposition of this paper. 

The first named author was supported by NSF grant DMS-2004300 and a grant from the
Simons Foundation (917555, Lindenstrauss). The second named author thanks Churchill College
Cambridge for its hospitality. 

\end{acknowledgements}

\section{Preliminaries on Mackey functors and Tambara functors}
The box product is a symmetric monoidal product in the category of Mackey
functors. In this paper we use an explicit formula for box products of
$C_p$-Mackey functors due to \cite{lewis}.  
\begin{equation*}
(\und{M} \Box \und{N})(C_p/e) = \und{M}(C_p/e) \otimes \und{N}(C_p/e),
\end{equation*}
\begin{equation*}
(\und{M} \Box \und{N})(C_p/C_p) = \bigg(\und{M}(C_p/C_p) \otimes \und {N} (C_p/C_p) \oplus
\big(\und{M}(C_p/e) \otimes \und{N}(C_p/e)\big)_{\text{Weyl}}\bigg)/ \text{FR}.
\end{equation*}
Here,  the Weyl group, which is $C_p$, acts diagonally on $\und{M}(C_p/e) \otimes
\und{N}(C_p/e)$. The elements in the orbit $\big(\und{M}(C_p/e) \otimes \und{N}(C_p/e)\big)_{\text{Weyl}}$ are formal transfers and
    we will write their equivalence classes in $[-]$ to distinguish
    them from
    elements in the first part $\und{M}(C_p/C_p) \otimes \und {N} (C_p/C_p)$. The
    relation FR, short for Frobenius reciprocity, forces 
\begin{equation*}
x \otimes \tran(y) = [\res(x) \otimes y] \text{ and } \tran(x) \otimes y = [x \otimes \res(y)].
\end{equation*}

See  \cite[1.2.1]{mazur} for an explicit description of the box product structure for  the groups
$C_{p^n}$. 
For any commutative ring $R$ and prime $p$, we denote by $\und{R}^c$ the
constant $C_p$-Tambara functor with $\und{R}^c(C_p/e) = \und{R}^c(C_p/C_p) = R$
and $\norm(a) = a^p$, $\tran(a) = p\cdot a$ and $\res(a) = a$ for all $a \in R$. The
action of $C_p = \langle \tau\rangle$ on $\und{R}^c$ is trivial.

For a commutative $C_p$-ring $T$ we denote by $\und{T}^{\fix}$ the $C_p$-Tambara functor 
\[
  \und{T}^{\fix}(C_p/e) = T,  \qquad \und{T}^{\fix}(C_p/C_p) = T^{C_p},  
\]
Here, the restriction map $\res$ is the inclusion map $T^{C_p} \hookrightarrow T$, $\tran(a) =
a + \tau(a) + \cdots + \tau^{p-1}(a)$, and  $\norm(a) = a\tau(a)\cdots\tau^{p-1}(a)$.  

Note that the functor $(-)^{\fix}$ from the category of commutative rings with $C_p$-action to the category of $C_p$-Tambara functors is right adjoint to the evaluation functor 
  at the free orbit $C_p/e$. In particular, for every $C_p$-Galois
  extension $K \subset L$ the inclusion map $K=\und{K}^c(C_p/e) \to L$ is a $C_p$-map so it gives an adjoint map
  \begin{equation} \label{eq:adjoint} \und{K}^c \ra   \und{L}^{\fix}.\end{equation} 

\section{The case of $C_2$-Galois extensions of fields}
Let $K \subset L$ be a $C_2$-Galois extension of fields. 
Note that, depending on the characteristic of $K$, these can take one
of two forms: If the characteristic of $K$ is $2$, then a
$C_2$-extension is an Artin-Schreier extension \cite[Theorem 6.4]{lang}. If the characteristic
is prime to $2$, then $K \subset L$ is a Kummer extension. See
\cite[p.~89]{birch} for background. We will use the specific
forms of such extensions in  our arguments in both cases.

For any $C_2$-Galois extension $K\subset L$, we have a map of $C_2$-Tambara functors $\und{K}^c \ra
    \und{L}^{\fix}$ adjoint as in  \eqref{eq:adjoint} to the inclusion 
$K \ra L$ which is a $C_2$ map. In this section, we prove: 
\begin{thm} \label{thm:c2-flat}
 For a $C_2$-Galois extension $K \subset L$, the map $\und{K}^c \ra \und{L}^\fix$ is $C_2$-Tambara \'etale.
\end{thm}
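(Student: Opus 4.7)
The plan is to establish Green \'etaleness---i.e., flatness of $\und{L}^{\fix}$ over $\und{K}^c$ together with $\und{I}^2 = \und{I}$---which by the introductory discussion yields Tambara \'etaleness. The argument naturally splits into the Kummer case ($\mathrm{char}\,K \neq 2$, $L = K(\sqrt{d})$) and the Artin--Schreier case ($\mathrm{char}\,K = 2$, $L = K(\alpha)$ with $\alpha^2 + \alpha = a$), but the structural picture is parallel.

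I would first compute $\und{T} := \und{L}^{\fix} \Box_{\und{K}^c} \und{L}^{\fix}$ level by level and identify it with $\und{L \otimes_K L}^{\fix}$, where $L \otimes_K L$ carries the diagonal Galois action. At the free orbit, $\und{T}(C_2/e) = L \otimes_K L$, and the classical Galois descent isomorphism $L \otimes_K L \xrightarrow{\sim} L \times L$, $a \otimes b \mapsto (ab, \tau(a)b)$, converts the diagonal Weyl action into the coordinatewise Galois action on $L \times L$. At the fixed orbit, I would apply Lewis's formula relative to $\und{K}^c$ and use surjectivity of $\tran \colon L \to K$ (valid for separable extensions) together with the Frobenius reciprocity relation to absorb the $K \otimes_K K$ summand into the Weyl-orbit piece, obtaining $\und{T}(C_2/C_2) \cong (L \otimes_K L)^{C_2} \cong K \times K$.

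Under this identification the multiplication $\mul$ is projection onto the first factor at both Mackey levels, so $\und{I}(C_2/e) \cong L$ and $\und{I}(C_2/C_2) \cong K$ are each generated as a Green ideal by the element corresponding to the idempotent $(0,1) \in L \times L$. Pulling back through the Galois isomorphism yields the explicit $C_2$-invariant idempotents
\[
e_{-} = \tfrac{1}{2}\bigl(1 \otimes 1 - \tfrac{1}{d}(\sqrt{d} \otimes \sqrt{d})\bigr) \quad \text{(Kummer case)}, \qquad e_{-} = \alpha \otimes 1 + 1 \otimes \alpha \quad \text{(Artin--Schreier case)},
\]
and a short direct calculation confirms $e_{-}^{2} = e_{-}$ in both characteristics. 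Since $\und{I}$ is generated at every Mackey level by an idempotent, $\und{I}^2 = \und{I}$. Flatness of $\und{L}^{\fix}$ over $\und{K}^c$ follows from the normal basis theorem ($L \cong K[C_2]$ as $K[C_2]$-modules) combined with the explicit Lewis formula, so that $\und{L}^{\fix} \Box_{\und{K}^c} (-)$ preserves short exact sequences.

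The main obstacle I anticipate is the bookkeeping at the $C_2/C_2$ level: carefully translating Lewis's formula from the Mackey box product (over Burnside) to the relative box product over $\und{K}^c$, and verifying that the Frobenius reciprocity relation, combined with trace-surjectivity, collapses exactly the correct summand to match $(L \otimes_K L)^{C_2}$. Once the box product has been correctly identified, the explicit idempotents together with the flatness statement close the proof uniformly in both characteristics.
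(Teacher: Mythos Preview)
Your approach is correct and genuinely more conceptual than the paper's. The paper never identifies $\und{L}^{\fix}\Box_{\und{K}^c}\und{L}^{\fix}$ with $\und{(L\otimes_K L)}^{\fix}$; instead it grinds out the Lewis formula for $\und{L}^{\fix}\Box\und{L}^{\fix}$ and $\und{L}^{\fix}\Box\und{K}^c\Box\und{L}^{\fix}$ separately in each characteristic, passes to the coequalizer, and then locates a generator of $\und{I}(C_2/C_2)$ (namely $1\otimes 1+[\alpha\otimes\alpha]$ in characteristic~$2$ and $2a-[\alpha\otimes\alpha]$ otherwise) which it checks directly is idempotent, respectively in $\und{I}^2$. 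Your Galois-descent picture $L\otimes_K L\cong L\times L$ explains \emph{why} those generators are idempotent---they are literally the image of $(0,1)$---and your explicit $e_{-}$'s agree with the paper's generators up to a unit once one unwinds the identification $[\alpha\otimes\alpha]\leftrightarrow \tran(\alpha\otimes\alpha)$. What your route buys is a uniform, structure-level argument; what the paper's route buys is that no separate verification of $\und{L}^{\fix}\Box_{\und{K}^c}\und{L}^{\fix}\cong\und{(L\otimes_K L)}^{\fix}$ is needed, since everything is computed from scratch.

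Two places deserve more care. First, the step you flag yourself: the isomorphism with $\und{(L\otimes_K L)}^{\fix}$ must be checked as a map of \emph{Green} functors, not just levelwise as $K$-modules, so that $e_{-}^2=e_{-}$ in $L\otimes_K L$ really gives the same identity at the $C_2/C_2$ level of the box product (this amounts to checking that the product of two formal-transfer classes matches the product in $(L\otimes_K L)^{C_2}$, which it does since $\res$ is injective). Second, your flatness argument is thinner than the paper's: invoking the normal basis theorem to get $L\cong K[C_2]$ is the right idea, but you then need the further observation that $\und{K[C_2]}^{\fix}$ is the $\und{K}^c$-module corepresenting evaluation at $C_2/e$, hence projective. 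The paper instead proves projectivity by an explicit lifting argument, splitting into $\pm$-eigenspaces in the Kummer case and using $\tran(\alpha)=1$ in the Artin--Schreier case.
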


The argument depends on the characteristic of the ground field $K$ and has two cases.
\subsection { Characteristic $2$:  Artin-Schreier Extensions
  }
  In this section, we will assume that
  $K$ is a field of characteristic $2$ and $L$ is a $C_2$-Galois extension of $K$.  Then $L = K(\alpha)$
 where $\alpha^2+\alpha + a=0$ for some $a \in K^\times$.
  \begin{lem}\label{LandL}
  If $K\subset L$ is a $C_2$-Galois extension and $K$ is a field of characteristic $2$ with  $L = K(\alpha)$ as above. Then the Tambara functor 
  $\und{L}^\fix \Box \und{L}^\fix$ is given by the diagram 
  \[ \xymatrix@R=0.5cm{
      \und{L}^\fix \Box \und{L}^\fix(C_2/C_2):   & K \otimes_{\Z} K \oplus \{[\lambda \alpha \otimes \mu \alpha], \lambda, \mu \in K\} \ar@/_3ex/[dd]_{\res} \\
      & \\
\und{L}^\fix \Box \und{L}^\fix(C_2/e):  & L \otimes_{\Z} L. \ar@<0.8ex>@/_3ex/[uu]_{\tran} \ar@<-0.8cm>@/_3ex/[uu]_{\norm}
    } \] 
  The restriction on $K\otimes_{\Z}K$ is the inclusion map, and
  \begin{align}
    \label{eq:1}
    \res[\lambda \alpha \otimes \mu \alpha] =  & \lambda \alpha \otimes \mu + \lambda \otimes \mu \alpha + \lambda \otimes \mu. 
 \end{align} 
 The transfer is given for $\lambda,\mu\in K$ by
 \begin{equation*}
\tran(\lambda \otimes \mu) = 0,\quad  \tran(\lambda \otimes \mu \alpha) = \lambda \otimes \mu = \tran(\lambda \alpha \otimes \mu) , \quad \tran(\lambda \alpha \otimes \mu \alpha) = [\lambda \alpha \otimes \mu \alpha].
\end{equation*}
The norm on pure
   tensors is calculated as the tensor product of the norms on generators with
 the help of the diagonal action;  the norm of a sum can be
   calculated by $\norm(x+y) = \norm(x) + \norm(y) + \tran(x \cdot \tau y)$ (see
   \cite[Example 1.4.1]{mazur} or, for the more general case of cyclic
   $p$-groups, \cite[Corollary 2.6]{hm}, \cite[section
   1.4.1]{mazur}).

  \end{lem}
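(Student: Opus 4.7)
The plan is to apply Lewis's formula for the $C_2$-box product directly to $\und{L}^{\fix}$ and then simplify the Weyl-orbit summand using Frobenius reciprocity combined with the explicit arithmetic of the Artin-Schreier extension. Writing $L = K \oplus K\alpha$ as a $K$-vector space, Lewis's formula immediately gives $(\und{L}^{\fix}\Box\und{L}^{\fix})(C_2/e) = L \otimes_{\Z} L$, which decomposes into the four pieces $K\otimes_{\Z}K$, $K\otimes_{\Z}K\alpha$, $K\alpha\otimes_{\Z}K$, and $K\alpha\otimes_{\Z}K\alpha$. At level $C_2/C_2$, the formula gives $(K \otimes_{\Z} K) \oplus (L \otimes_{\Z} L)_{\text{Weyl}}$ modulo the FR relations, and the goal is to reduce the second summand.

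The key arithmetic input is the transfer on $\und{L}^{\fix}$: since $\tau(\alpha) = \alpha+1$ in characteristic $2$, one has $\tran(\mu + \nu\alpha) = 2\mu + \nu(2\alpha+1) = \nu$. The FR relation $[\res(\lambda)\otimes y] = \lambda\otimes\tran(y)$ with $\lambda \in K$ then reduces $[\lambda\otimes\mu]$ to $0$ and $[\lambda\otimes\nu\alpha]$ to $\lambda\otimes\nu \in K\otimes_{\Z}K$, and the symmetric relation handles $[\mu\alpha\otimes\lambda]$. Because neither factor of $\lambda\alpha\otimes\mu\alpha$ is a restriction from $K$, no FR relation directly applies to the fourth piece, so only $[\lambda\alpha\otimes\mu\alpha]$ survives as genuinely new content, yielding the asserted form.

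For the structure maps, the transfer formulas follow from these reductions together with the definitional identity $\tran(x\otimes y) = [x\otimes y]$ on the Weyl summand. For the restriction of $[\lambda\alpha\otimes\mu\alpha]$, I would use the double-coset identity $\res\circ\tran = \id + \tau$: applying it to $\lambda\alpha\otimes\mu\alpha$ and expanding $(\lambda\alpha+\lambda)\otimes(\mu\alpha+\mu)$ over $\mathbb{F}_2$ cancels the repeated $\lambda\alpha\otimes\mu\alpha$ and leaves precisely \eqref{eq:1}. The multiplicativity of the norm on pure tensors is a general feature of box products of Tambara functors, and the quoted formula $\norm(x+y) = \norm(x) + \norm(y) + \tran(x\cdot\tau y)$ extends this to sums, completing the description.

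The main point requiring care is verifying that the Weyl orbit identification $[\tau(\lambda\alpha)\otimes\tau(\mu\alpha)] = [\lambda\alpha\otimes\mu\alpha]$ imposes no further relation on the surviving summand. Expanding the left side produces $[\lambda\alpha\otimes\mu\alpha]$ together with cross terms $[\lambda\alpha\otimes\mu]$, $[\lambda\otimes\mu\alpha]$, and $[\lambda\otimes\mu]$; the FR reductions above send these into $K\otimes_{\Z}K$, where they combine to $2(\lambda\otimes\mu)$, which vanishes since $2\lambda = 0$ in $K$. Thus the Weyl relation is tautological on the new summand and does not cut it down further, confirming that the abelian group generated by the $[\lambda\alpha\otimes\mu\alpha]$ is free in the sense claimed in the statement.
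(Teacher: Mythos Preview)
Your argument is correct and follows essentially the same route as the paper: apply Lewis's formula, use the Artin--Schreier arithmetic $\tau(\alpha)=\alpha+1$ (hence $\tran(\mu+\nu\alpha)=\nu$) together with Frobenius reciprocity to collapse the mixed Weyl classes into $K\otimes_{\Z}K$, and then read off $\res$, $\tran$, and $\norm$ from $\res\circ\tran=\id+\tau$ and the standard box-product formulas. The only organizational difference is that the paper first analyzes the Weyl identifications and then applies FR, whereas you lead with FR and check the Weyl relation afterwards; both orderings give the same reduction.
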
 
 
 \begin{proof}
The box product $\und{L}^\fix \Box \und{L}^\fix$ is given by the following diagram:
  \[ \xymatrix@R=0.5cm{
      \und{L}^\fix \Box \und{L}^\fix(C_2/C_2):   & (K \otimes_{\Z} K \oplus (L \otimes_{\Z}L)_{\text{Weyl}})/\text{FR} \ar@/_3ex/[dd]_{\res} \\
      & \\
\und{L}^\fix \Box \und{L}^\fix(C_2/e):  & L \otimes_{\Z} L. \ar@<0.8ex>@/_3ex/[uu]_{\tran} \ar@<-0.8cm>@/_3ex/[uu]_{\norm}
    } \] 
  As the tensor product is over the integers we have to consider elements $\lambda +
  \mu \alpha$ with $\lambda, \mu \in K$. Since the minimal polynomial
  of $\alpha$ over $K$ is $\alpha(\alpha + 1) = a$, $\tau(\alpha)=\alpha+1$ so the Weyl-action identifies $[\lambda \otimes \mu \alpha]$
  with
  \[[\lambda \otimes \mu(\alpha +1)] =[\lambda \otimes \mu \alpha + \lambda \otimes \mu]. \]
  Hence in the Weyl quotient all elements $[\lambda \otimes \mu]$ are trivial and we also identify $[\lambda \otimes \mu \alpha]$ with $[\lambda \otimes \mu(\alpha +1)]$. The dual result holds for $[\lambda \alpha \otimes \mu]$. For $[\lambda \alpha \otimes \mu \alpha]$ we obtain
  \[ [\lambda \alpha \otimes \mu \alpha] \sim [\lambda(\alpha+1) \otimes \mu(\alpha+1)]\]
  and this yields $[\lambda \alpha \otimes \mu] \sim [\lambda \otimes \mu \alpha]$.
  So we get Weyl equivalence classes $[\lambda \alpha \otimes \mu \alpha]$ and
  $[\lambda \otimes \mu \alpha] = [\lambda \alpha \otimes \mu]$. 

  The Frobenius reciprocity relations identify $[\lambda \otimes \mu \alpha]$ with
  \[ \lambda \otimes \trace(\mu \alpha) = \lambda \otimes \mu(\alpha + (\alpha +1)) = \lambda \otimes \mu \]
and  $[\lambda \alpha \otimes \mu]$ with $\lambda \otimes \mu$. So we are left with
   \[ \xymatrix@R=0.5cm{
      \und{L}^\fix \Box \und{L}^\fix(C_2/C_2):   & K \otimes_{\Z} K \oplus \{[\lambda \alpha \otimes \mu \alpha], \lambda, \mu \in K\} \ar@/_3ex/[dd]_{\res} \\
      & \\
\und{L}^\fix \Box \und{L}^\fix(C_2/e):  & L \otimes_{\Z} L, \ar@<0.8ex>@/_3ex/[uu]_{\tran} \ar@<-0.8cm>@/_3ex/[uu]_{\norm}
    } \] 
  as claimed in the lemma.  The restriction on $K\otimes_{\Z}K$ is  the inclusion map and to prove Equation (\ref{eq:1}), we observe that
  \begin{align*}
    \res[\lambda \alpha \otimes \mu \alpha] = & \res (\tran(\lambda \alpha \otimes \mu \alpha)) =  \trace(\lambda \alpha \otimes \mu \alpha)\\
\nonumber       = & \lambda \alpha \otimes \mu \alpha + \lambda(\alpha+1) \otimes \mu(\alpha +1) \\
\nonumber    = & \lambda \alpha \otimes \mu + \lambda \otimes \mu \alpha + \lambda \otimes \mu. 
 \end{align*} 
 The transfer is given by $\tran(a \otimes b) = [a \otimes b]$, so
 $\tran(\lambda \otimes \mu) = [\lambda \otimes \mu] = 0$,
 \begin{equation}
    \label{eq:2}
 \tran(\lambda \otimes \mu \alpha) = [\lambda \otimes \mu \alpha] \sim \lambda \otimes \mu \sim [\lambda  \alpha \otimes \mu] = \tran(\lambda \alpha \otimes \mu) 
\end{equation}
and $\tran(\lambda \alpha \otimes \mu \alpha) = [\lambda \alpha \otimes \mu \alpha]$. 
 \end{proof}
 
   \begin{lem}\label{LKL}
  If $K\subset L$ is a $C_2$-Galois extension and $K$ is a field of characteristic $2$ so $L = K(\alpha)$
 where $\alpha^2+\alpha + a=0$ for some $a \in K^\times$, then the Tambara functor 
  $  \und{L}^\fix \Box \und{K}^c \Box \und{L}^\fix$ is given by the diagram 
  \[ \xymatrix@R=0.5cm{
  \und{L}^\fix \Box \und{K}^c \Box \und{L}^\fix(C_2/C_2):   & K \otimes_{\Z} K \otimes_\Z K \oplus \{[\lambda \alpha \otimes \mu \otimes \nu \alpha], \lambda, \mu, \nu \in K\} \ar@/_3ex/[dd]_{\res} \\
      & \\
\und{L}^\fix \Box \und{K}^c \Box \und{L}^\fix(C_2/e):  & L \otimes_{\Z} K \otimes_\Z L, \ar@<0.8ex>@/_3ex/[uu]_{\tran} \ar@<-0.8cm>@/_3ex/[uu]_{\norm}
    } \] 
  where the restriction on $K \otimes_\Z K  \otimes_\Z K$ is the inclusion map and  for all $\lambda, \mu, \nu\in K$
\begin{align*}
\res[\lambda \alpha \otimes \mu \otimes \nu \alpha]  =& \lambda \alpha \otimes \mu \otimes \nu + \lambda \otimes \mu \otimes \nu \alpha + \lambda \otimes \mu \otimes \nu,
\end{align*}
  \begin{align*}
    \tran(\lambda \alpha \otimes \mu \otimes \nu \alpha) & = [\lambda \alpha \otimes \mu \otimes \nu \alpha], \\
    \tran(\lambda \otimes \mu \otimes \nu) & = 0, \\
    \tran(\lambda \alpha \otimes \mu \otimes \nu) & = \tran(\lambda \otimes \mu \otimes \nu \alpha) = \lambda \otimes \mu \otimes \nu.
\end{align*}
Norms are calculated on pure tensors coordinatewise and extended to sums as in the previous lemma.

\end{lem}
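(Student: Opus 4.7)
The plan is to mirror the proof of Lemma~\ref{LandL}, now for a three-fold box product in which the middle factor $\und{K}^c$ plays a purely parametrizing role. The crucial observation is that $\und{K}^c$ has identity restriction, trivial $C_2$-action, and transfer equal to multiplication by $2$, which vanishes in characteristic $2$. Using associativity of $\Box$, the iterated box product formula yields at the $C_2/C_2$ level
\[
\bigl(K \otimes_\Z K \otimes_\Z K \ \oplus\  (L \otimes_\Z K \otimes_\Z L)_{\text{Weyl}}\bigr)/\text{FR},
\]
with $\tau$ acting as $\alpha \mapsto \alpha+1$ on each outer $L$-factor and trivially on the middle, so only the outer factors see nontrivial Weyl action.

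Next I would eliminate most formal transfer classes using Frobenius reciprocity. Writing $L = K \oplus K\alpha$, for $\lambda,\mu,\nu \in K$, FR applied through the leftmost factor gives
\[
[\lambda\alpha \otimes \mu \otimes \nu] = \tran(\lambda\alpha) \otimes \mu \otimes \nu = \lambda \otimes \mu \otimes \nu,
\]
and symmetrically $[\lambda \otimes \mu \otimes \nu\alpha] = \lambda \otimes \mu \otimes \nu$, while $[\lambda \otimes \mu \otimes \nu] = 2\lambda \otimes \mu \otimes \nu = 0$ in characteristic $2$. Only $[\lambda\alpha \otimes \mu \otimes \nu\alpha]$ survives as a genuine formal transfer class, since neither outer factor lies in the image of a restriction map. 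An explicit expansion of $\tau(\lambda\alpha) \otimes \mu \otimes \tau(\nu\alpha) = \lambda(\alpha+1) \otimes \mu \otimes \nu(\alpha+1)$, combined with the identifications above, confirms this class is Weyl-invariant.

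Finally I would verify the structure maps. The restriction on $K \otimes_\Z K \otimes_\Z K$ is inclusion by the construction of $\Box$. For the formal transfer class, the identity $\res \circ \tran = \mathrm{tr}$ gives
\[
\res[\lambda\alpha \otimes \mu \otimes \nu\alpha] = \lambda\alpha \otimes \mu \otimes \nu\alpha + \lambda(\alpha+1) \otimes \mu \otimes \nu(\alpha+1),
\]
which expands and simplifies in characteristic $2$ to the claimed formula. The transfer values follow from $\tran(x) = [x]$ combined with the FR identifications of the previous step. Norms on pure tensors are computed coordinatewise, extending to sums via the formula $\norm(x+y) = \norm(x) + \norm(y) + \tran(x \cdot \tau y)$ as in Lemma~\ref{LandL}. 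The main obstacle is careful bookkeeping of iterated FR across three factors; once one notices that $\und{K}^c$ contributes trivial Weyl action and transfer that vanishes in characteristic $2$, the computation reduces essentially to the two-fold case.
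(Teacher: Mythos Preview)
Your proposal is correct and follows essentially the same approach as the paper: both arguments write down the iterated box-product formula at $C_2/C_2$, use the Weyl action and Frobenius reciprocity to reduce the formal transfer classes to $K\otimes_\Z K\otimes_\Z K$ together with the surviving classes $[\lambda\alpha\otimes\mu\otimes\nu\alpha]$, and then read off the structure maps exactly as you do. The only organisational difference is that the paper first computes the intermediate two-fold product $\und{L}^\fix \Box \und{K}^c$ (finding it reduces to $K\otimes_\Z K$ at the fixed level) and then boxes once more with $\und{L}^\fix$, whereas you jump directly to the three-fold formula; and the paper kills $[\lambda\otimes\mu\otimes\nu]$ via the Weyl relation rather than via FR and characteristic~$2$, but both routes yield the same vanishing.
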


\begin{proof}
The definition of $\und{L}^\fix \Box \und{K}^c$ gives the diagram
  \[ \xymatrix@R=0.5cm{
      \und{L}^\fix \Box \und{K}^c(C_2/C_2):   & (K \otimes_{\Z} K \oplus (L \otimes_{\Z}K)_{\text{Weyl}})/\text{FR} \ar@/_3ex/[dd]_{\res} \\
      & \\
\und{L}^\fix \Box \und{K}^c(C_2/e):  & L \otimes_{\Z} K. \ar@<0.8ex>@/_3ex/[uu]_{\tran} \ar@<-0.8cm>@/_3ex/[uu]_{\norm}
    } \] 
 
  The Weyl action sends a generator $\lambda \alpha \otimes \mu$ to $\lambda(\alpha +1) \otimes \mu = \lambda \alpha \otimes \mu + \lambda \otimes \mu$. Hence $[\lambda \otimes \mu]$ is trivial for all $\lambda, \mu \in K$ and the class $[\lambda \alpha \otimes \mu]$ is equal to $[\lambda (\alpha +1) \otimes \mu]$.

  The only new relation in the Frobenius reciprocity identifies $\trace(\lambda_1 + \mu \alpha) \otimes \lambda_2$ with
\[  [\lambda_1 \otimes \lambda_2] + [\mu\alpha \otimes \lambda_2] = [\mu\alpha \otimes \lambda_2].\]  
As
\[ \trace(\lambda_1 + \mu\alpha) \otimes \lambda_2 = \mu \otimes \lambda_2\]
this identifies $[\mu \alpha \otimes \lambda_2]$ with $\mu \otimes \lambda_2$ and hence we just get
  \[ \xymatrix@R=0.5cm{
      \und{L}^\fix \Box \und{K}^c(C_2/C_2):   & K \otimes_{\Z} K  \ar@/_3ex/[dd]_{\res} \\
      & \\
\und{L}^\fix \Box \und{K}^c(C_2/e):  & L \otimes_{\Z} K \ar@<0.8ex>@/_3ex/[uu]_{\tran} \ar@<-0.4cm>@/_3ex/[uu]_{\norm}
    } \] 
  where the restriction map is the inclusion, $\tran(\lambda \alpha \otimes \mu) = [\lambda \alpha \otimes \mu] = \lambda \otimes \mu$, and $\tran(\lambda \otimes \mu) = 0$. The norm sends a generator $\lambda \otimes \mu$ to $\lambda^2 \otimes \mu^2$ and $\mu \alpha \otimes \nu$ to $\mu^2a \otimes \nu^2$.

  \bigskip
  The three-fold box product $\und{L}^\fix \Box \und{K}^c \Box \und{L}^\fix$ is
  \[ \xymatrix@R=0.5cm{
  \und{L}^\fix \Box \und{K}^c \Box \und{L}^\fix(C_2/C_2):   & (K \otimes_{\Z} K \otimes_\Z K \oplus (L \otimes_{\Z}K \otimes_\Z L)_{\text{Weyl}})/\text{FR} \ar@/_3ex/[dd]_{\res} \\
      & \\
\und{L}^\fix \Box \und{K}^c \Box \und{L}^\fix(C_2/e):  & L \otimes_{\Z} K \otimes_\Z L. \ar@<0.8ex>@/_3ex/[uu]_{\tran} \ar@<-0.8cm>@/_3ex/[uu]_{\norm}
    } \] 
 In $L \otimes_{\Z} K \otimes_\Z L$, as the Weyl action on
$\lambda \otimes \mu \otimes \nu \alpha$ gives $\lambda \otimes \mu \otimes \nu\alpha + \lambda \otimes \mu \otimes \nu$, we see that pure scalar
terms $\lambda \otimes \mu \otimes \nu$ are identified to zero.  That is, $[\lambda \otimes \mu \otimes
  \nu] = 0$. Modulo those, the action on 
  $\lambda \alpha \otimes \mu \otimes \nu \alpha$ yields
  \[ \lambda(\alpha + 1) \otimes \mu \otimes \nu(\alpha +1) =  \lambda \alpha \otimes \mu \otimes \nu \alpha + \lambda \alpha \otimes \mu \otimes \nu + \lambda  \otimes \mu \otimes \nu \alpha\]
and thus $[\lambda \alpha \otimes \mu \otimes \nu] = [\lambda \otimes \mu \otimes \nu \alpha]$. 
Again, the only new relation given by Frobenius reciprocity identifies $[\lambda \alpha \otimes \mu \otimes \nu]$ and $[\lambda \otimes \mu \otimes \nu \alpha]$ with $\lambda \otimes \mu \otimes \nu$. 
Thus we get
  \[ \xymatrix@R=0.5cm{
  \und{L}^\fix \Box \und{K}^c \Box \und{L}^\fix(C_2/C_2):   & K \otimes_{\Z} K \otimes_\Z K \oplus \{[\lambda \alpha \otimes \mu \otimes \nu \alpha], \lambda, \mu, \nu \in K\} \ar@/_3ex/[dd]_{\res} \\
      & \\
\und{L}^\fix \Box \und{K}^c \Box \und{L}^\fix(C_2/e):  & L \otimes_{\Z} K \otimes_\Z L. \ar@<0.8ex>@/_3ex/[uu]_{\tran} \ar@<-0.8cm>@/_3ex/[uu]_{\norm}
    } \] 
  The restriction on $K \otimes_\Z K  \otimes_\Z K$ is the inclusion map and 
\begin{align*}
\res[\lambda \alpha \otimes \mu \otimes \nu \alpha] = & \res (\tran(\lambda \alpha \otimes \mu \otimes \nu \alpha)) = \trace(\lambda \alpha \otimes
                             \mu \otimes \nu \alpha)\\
  =& \lambda \alpha \otimes \mu \otimes \nu + \lambda \otimes \mu \otimes \nu \alpha + \lambda \otimes \mu \otimes \nu.
\end{align*}
For the transfer, we obtain the formulas as given in the statement of the lemma.
\end{proof}

 \begin{prop} \label{prop:c2-AScase}
Let $K$ be a field of characteristic $2$ and $L$ be a $C_2$-Galois extension of $K$,
so $L = K(\alpha)$ where $\alpha^2+\alpha + a=0$ for some $a \in K^\times$. 
Then
 the relative box product $ \und{L}^\fix \Box_{\und{K}^c} \und{L}^\fix$ is given by
\begin{equation*}
\xymatrix@R=0.5cm{
  \und{L}^\fix \Box_{\und{K}^c} \und{L}^\fix(C_2/C_2):   & K \otimes_{K} K  \oplus \{[\lambda \alpha \otimes \alpha], \lambda \in K\} \ar@/_3ex/[dd]_{\res} \\
      & \\
\und{L}^\fix \Box_{\und{K}^c} \und{L}^\fix(C_2/e):  & L \otimes_K L. \ar@<0.8ex>@/_3ex/[uu]_{\tran} \ar@<-0.8cm>@/_3ex/[uu]_{\norm}}
\end{equation*}
with structure maps as specified in the proof below. Moreover, we have: 
\begin{enumerate}
\item The ideal $\und{I}$ at the fixed level, $\und{I}(C_2/C_2)$, is spanned by $1 \otimes 1 + [\alpha \otimes \alpha]$.
\item The $C_2$-Tambara K\"ahler differentials $\Omega_{\und{L}^\fix|\und{K}^c}^{1,C_2}$ vanish.
\end{enumerate}
\end{prop}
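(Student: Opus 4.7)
The plan is first to compute $\und{L}^\fix \Box_{\und{K}^c} \und{L}^\fix$ levelwise as the coequalizer of the two $\und{K}^c$-actions, namely
\[
\und{L}^\fix \Box \und{K}^c \Box \und{L}^\fix \rightrightarrows \und{L}^\fix \Box \und{L}^\fix,
\]
using the explicit descriptions from Lemmas~\ref{LandL} and~\ref{LKL}. At the free level, the coequalizer of $L \otimes_\Z K \otimes_\Z L \rightrightarrows L \otimes_\Z L$ is the ordinary relative tensor product $L \otimes_K L$. At the fixed level, the coequalizer replaces $K \otimes_\Z K$ by $K \otimes_K K \cong K$, while on the transfer summand it imposes $[\mu\lambda\alpha \otimes \nu\alpha] = [\lambda\alpha \otimes \mu\nu\alpha]$ for all $\lambda, \mu, \nu \in K$. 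Taking $\nu = 1$ and absorbing $\mu$ into $\lambda$ shows that every bracket reduces to some $[\lambda\alpha \otimes \alpha]$, giving the summand $\{[\lambda\alpha \otimes \alpha] : \lambda \in K\}$. The restriction, transfer and norm formulas in the statement are then obtained by passing the formulas of Lemma~\ref{LandL} to this quotient; in particular $\res[\lambda\alpha \otimes \alpha] = \lambda\alpha \otimes 1 + \lambda \otimes \alpha + \lambda \otimes 1$ in $L \otimes_K L$.

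For part (1), I compute the multiplication map at the fixed level. On $K \otimes_K K \cong K$ it is the identity, while for the bracket summand, since $\mul$ commutes with transfer one has
\[
\mul[\lambda\alpha \otimes \alpha] = \tran(\lambda\alpha \cdot \alpha) = \tran(\lambda\alpha) + \tran(\lambda a) = \lambda,
\]
using $\alpha^2 = \alpha + a$, $\tran(\lambda\alpha) = \lambda\alpha + \lambda(\alpha+1) = \lambda$, and $\tran(\lambda a) = 2\lambda a = 0$ in characteristic~$2$. Consequently $k \cdot (1 \otimes 1) + [\mu\alpha \otimes \alpha]$ lies in $\und{I}(C_2/C_2)$ exactly when $k + \mu = 0$, so $\und{I}(C_2/C_2)$ is the $K$-span of the single element $y := 1 \otimes 1 + [\alpha \otimes \alpha]$, as claimed.

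For part (2), I verify that $y$ is idempotent. The key step is the identity $[\alpha \otimes \alpha]^2 = [\alpha \otimes \alpha]$, which I plan to establish using the Green-functor formula $\tran(u)\cdot\tran(v) = \tran(u \cdot \res\tran(v))$ together with the explicit expression $\res[\alpha\otimes\alpha] = \alpha\otimes 1 + 1\otimes\alpha + 1\otimes 1$ and $\alpha^2 = \alpha + a$. Granting this, $y^2 = 1 + 2[\alpha\otimes\alpha] + [\alpha\otimes\alpha]^2 = y$ in characteristic~$2$, so $y \in \und{I}(C_2/C_2)^2$. At the free level, separability of $L/K$ gives $L \otimes_K L \cong L \times L$ with $\ker(\mul)$ equal to one factor; concretely $\und{I}(C_2/e)$ is the $L$-span of $\res(y) = \alpha \otimes 1 + 1 \otimes \alpha$, and a direct computation using $\alpha^2 = \alpha + a$ in characteristic~$2$ shows that $\res(y)$ is also idempotent. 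Combining the two levels, $\und{I} = \und{I}^2$ as a sub-Mackey functor, and a fortiori $\Omega^{1,C_2}_{\und{L}^\fix | \und{K}^c} = 0$.

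The main obstacle is the idempotence of $[\alpha\otimes\alpha]$ at the fixed level: it requires careful application of Frobenius reciprocity and cancellation of cross-terms in characteristic~$2$. Once that identity is established, both parts of the proposition follow quickly from the box product computation and the Green-functor structure maps.
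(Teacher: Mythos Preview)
Your proposal is correct and follows essentially the same route as the paper: compute the relative box product as the coequalizer using Lemmas~\ref{LandL} and~\ref{LKL}, identify $\und{I}(C_2/C_2)$ as the $K$-span of $1\otimes 1 + [\alpha\otimes\alpha]$, and then show this element is idempotent via the Frobenius reciprocity formula $\tran(u)\tran(v)=\tran(u\cdot\res\tran(v))$. The only cosmetic differences are that you determine $\mul[\lambda\alpha\otimes\alpha]$ by compatibility with $\tran$ (the paper uses compatibility with $\res$ and injectivity of restriction in $\und{L}^\fix$), and at the free level you verify directly that $\res(y)=\alpha\otimes1+1\otimes\alpha$ is idempotent rather than invoking \'etaleness of $K\subset L$; both variants are valid and equally short.
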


\begin{proof}
We use the results of Lemma \ref{LandL} and Lemma \ref{LKL} to construct the coequalizer $\und{L}^\fix \Box_{\und{K}^c} \und{L}^\fix$.
In the coequalizer, the $K$-action
on both copies of $L$ and on $K$ is identified and hence we obtain:
  \[ \xymatrix@R=0.5cm{
  \und{L}^\fix \Box_{\und{K}^c} \und{L}^\fix(C_2/C_2):   & K \otimes_{K} K  \oplus \{[\lambda \alpha \otimes \alpha], \lambda \in K\} \ar@/_3ex/[dd]_{\res} \\
      & \\
\und{L}^\fix \Box_{\und{K}^c} \und{L}^\fix(C_2/e):  & L \otimes_K L. \ar@<0.8ex>@/_3ex/[uu]_{\tran} \ar@<-0.8cm>@/_3ex/[uu]_{\norm}
    } \] 
For the rest of this proof, unadorned tensor products of elements
    are over $K$.  The restriction on $K \otimes_K K$ is the inclusion map and \begin{align*}
\res[\lambda \alpha \otimes \alpha] = & \lambda( \alpha \otimes 1 + 1 \otimes  \alpha + 1 \otimes 1),\\
    \tran(\lambda\cdot \alpha \otimes \alpha) & = [\lambda\cdot \alpha \otimes \alpha], \\
    \tran(\lambda\cdot 1 \otimes 1) & = 0, \\
  \tran(\lambda\cdot \alpha \otimes 1)& = \tran(\lambda\cdot 1 \otimes \alpha) = \lambda, \\
  \norm (\lambda\cdot \alpha \otimes \alpha) & = \lambda^2 a^2\\
  \norm(\lambda \cdot 1 \otimes 1 ) & = \lambda^2, \\
  \norm(\lambda\cdot \alpha \otimes 1) & = \norm (\lambda \cdot 1 \otimes \alpha) = \lambda^2 a.
\end{align*}

\item 
  The multiplication $\mul \colon \und{L}^\fix \Box_{\und{K}^c} \und{L}^\fix \ra \und{L}^\fix$ is a morphism of Tambara functors. Hence on $K \otimes_K K$ and $L \otimes_K L$ it is given by the multiplication on $K$ and $L$. As
  \begin{align*}
    \res (\mul[\lambda \alpha \otimes \alpha]) & =   \mul \circ \res[\lambda \alpha \otimes \alpha]\\
    & = \mul(\lambda(\alpha \otimes 1 + 1 \otimes \alpha + 1 \otimes 1))
    & \text{by\ Equation }~\eqref{eq:1} \\
    & = \lambda(2\alpha+1) = \lambda = \res(\lambda), 
  \end{align*}
  and the fact that restriction is injective in $\und{L}^{\fix}$,
  we get that $\mul[\lambda \alpha \otimes \alpha] = \lambda $. 
Therefore $1 \otimes 1 + [\alpha \otimes \alpha] \in\und{I}(C_2/C_2)$, the kernel of the fixed level of the
  multiplication map. By examination, $\und{I}(C_2/C_2)$  is spanned by $1 \otimes 1 +
  [\alpha \otimes \alpha] $.
  
\item 
The element $1 \otimes 1 + [\alpha \otimes \alpha] $ is actually
idempotent. For the following calculation we use the Frobenius
reciprocity formula for Green functors from \cite[p.~19]{mazur}.
\begin{align*}
  (1 \otimes 1 + [\alpha \otimes \alpha])(1 \otimes 1 + [\alpha \otimes \alpha]) & = 1 \otimes 1 +2[\alpha \otimes \alpha] + [\alpha \otimes \alpha][\alpha \otimes \alpha] \\
                                                                                     & = 1 \otimes 1 + [(\alpha \otimes \alpha) \res \, \tran(\alpha \otimes \alpha)] \\
                                                                                     & = 1 \otimes 1 + [(\alpha \otimes \alpha) \res(\alpha \otimes 1 + 1 \otimes \alpha + 1 \otimes 1)] \\
                                                                                     & = 1 \otimes 1 + [\alpha^2\otimes \alpha + \alpha \otimes \alpha^2 + \alpha \otimes \alpha] \\
                                                                                     & = 1 \otimes 1 + [(\alpha + a) \otimes \alpha + \alpha \otimes (\alpha + a) + \alpha \otimes \alpha] \\
                                                                                     & = 1 \otimes 1 + a[\alpha \otimes 1+1\otimes \alpha] + [\alpha \otimes \alpha] \\
 \text{ by\ Equation }~\eqref{eq:2}                      & = 1 \otimes 1 + 2a (1\otimes 1) + [\alpha \otimes \alpha]
                                         \\
  &= 1 \otimes 1 + [\alpha \otimes \alpha].
\end{align*}
This shows that $\und{I}/\und{I}^2(C_2/C_2) = 0$.
As $\und{I}(C_2/e)$ is just the kernel of the multiplication map $L \otimes_K L
  \ra L$ and as $K \subset L$ is \'etale, we know as well that $\und{I}/\und{I}^2(C_2/e) =
  0$. As
 \[ \Omega_{\und{L}^\fix|\und{K}^c}^{1,C_2} = \und{I}/\und{I}^{>1}\]
 and as $\und{I}/\und{I}^2$ maps surjectively onto $\und{I}/\und{I}^{>1}$, we
 obtain that $\Omega_{\und{L}^\fix|\und{K}^c}^{1,C_2}=0$. \qedhere
\end{proof}

\subsection { Characteristic $\neq 2$:  Kummer Extensions
  }
  In this section, we will assume that
  $K$ is a field of characteristic different than $2$ and $L$ is a $C_2$-Galois extension of $K$.  Then $L = K(\alpha)$
 where 
$\alpha^2=a \in K^\times$.
 
 \begin{lem}\label{LandLneq2}
If $K\subset L$ is a $C_2$-Galois extension and $K$ is a field of characteristic different than $2$, then  $L = K(\alpha)$ as above. The Tambara functor 
  $\und{L}^\fix \Box \und{L}^\fix$ is given by the diagram 

 \[ \xymatrix@R=0.5cm{
      \und{L}^\fix \Box \und{L}^\fix(C_2/C_2):   & K \otimes_{\Z} K \oplus \{[\lambda \alpha \otimes \mu \alpha], \lambda, \mu \in K\}\ar@/_3ex/[dd]_{\res} \\
      & \\
\und{L}^\fix \Box \und{L}^\fix(C_2/e):  & L \otimes_{\Z} L. \ar@<0.8ex>@/_3ex/[uu]_{\tran} \ar@<-0.8cm>@/_3ex/[uu]_{\norm}
    } \] 
  Restriction induces the inclusion on $K \otimes_{\Z} K$ and sends $[\lambda \alpha \otimes \mu \alpha]$ to $2\lambda \alpha \otimes \mu \alpha$. Transfer sends $\lambda \otimes \mu$ to $2 \lambda \otimes \mu$ and annihilates terms of the form $\lambda \otimes \mu \alpha$ and $\lambda \alpha \otimes \mu$. It sends $\lambda \alpha \otimes \mu \alpha$ to $[\lambda \alpha \otimes \mu \alpha]$. As $\norm(\alpha) = - \alpha^2 = -a$, we get that the norm map sends  $\lambda \alpha \otimes \mu$ to $- a \lambda^2 \otimes \mu^2$,
$\lambda \otimes \mu \alpha$ to $-  \lambda^2 \otimes a \mu^2$, and $\lambda \alpha \otimes \mu \alpha$ to $a \lambda^2 \otimes a \mu^2$. 
\end{lem}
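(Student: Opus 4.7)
The plan is to follow the template of Lemma \ref{LandL}, starting from the generic Lewis formula for the box product at the $C_2/C_2$ level as a quotient $(K \otimes_{\Z} K \oplus (L \otimes_{\Z} L)_{\text{Weyl}}) / \text{FR}$, and then successively imposing the Weyl and Frobenius reciprocity relations. The essential input that changes relative to the characteristic $2$ case is the Galois action: since the minimal polynomial of $\alpha$ over $K$ is $x^2 - a$, we have $\tau(\alpha) = -\alpha$ rather than $\tau(\alpha) = \alpha + 1$.

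First I would decompose a generic element of $L \otimes_{\Z} L$ in terms of the $K$-basis $\{1, \alpha\}$ and analyze the Weyl action. On a generator $\lambda \alpha^i \otimes \mu \alpha^j$ the element $\tau$ acts by the sign $(-1)^{i+j}$, so the Weyl coinvariants fix $[\lambda \otimes \mu]$ and $[\lambda \alpha \otimes \mu \alpha]$ and impose $2[\lambda \otimes \mu \alpha] = 0 = 2[\lambda \alpha \otimes \mu]$.

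Next I would apply Frobenius reciprocity via $[\res(x) \otimes y] = x \otimes \tran(y)$ and its symmetric version, using the key evaluations $\tran(\mu \alpha) = \mu\alpha + \tau(\mu\alpha) = 0$ and $\tran(\mu) = 2\mu$. These immediately give $[\lambda \otimes \mu \alpha] = 0$ and $[\lambda \alpha \otimes \mu] = 0$, while $[\lambda \otimes \mu]$ is identified with $2\lambda \otimes \mu \in K \otimes_{\Z} K$ and hence absorbed into the first summand. The only transfer class that survives is $[\lambda \alpha \otimes \mu \alpha]$, yielding the displayed description of the two levels.

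Finally I would read off the structure maps. The restriction formula follows from $\res \circ \tran = 1 + \tau$, giving $\res[\lambda\alpha \otimes \mu\alpha] = \lambda\alpha \otimes \mu\alpha + (-\lambda\alpha) \otimes (-\mu\alpha) = 2\lambda\alpha \otimes \mu\alpha$; the transfer formulas are immediate from the identifications above; and the norm formulas come from multiplicativity on pure tensors, using $\norm(\lambda) = \lambda^2$ on $K$ and $\norm(\mu\alpha) = \mu\alpha \cdot \tau(\mu\alpha) = -a\mu^2$. The main conceptual point, distinguishing this case from the characteristic $2$ case, is that the Weyl relation alone no longer kills $[\lambda \otimes \mu\alpha]$ (it only imposes $2[\lambda \otimes \mu\alpha] = 0$); one needs the identity $\tran(\mu\alpha) = 0$ coming from $\tau(\alpha) = -\alpha$ to finish the collapse via Frobenius reciprocity. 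Beyond this bookkeeping, no step presents a real obstacle.
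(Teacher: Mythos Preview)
Your proof is correct and follows exactly the paper's approach: start from the Lewis formula, pass to Weyl coinvariants, apply Frobenius reciprocity, and then read off the structure maps from $\res\circ\tran = 1+\tau$ and multiplicativity of the norm on pure tensors. If anything you are slightly more precise than the paper, which loosely says the Weyl relation ``kills'' $[\lambda\alpha\otimes\mu]$ when, as you note, it only imposes $2[\lambda\alpha\otimes\mu]=0$ and it is Frobenius reciprocity via $\tran(\mu\alpha)=0$ that actually annihilates the class.
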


 \begin{proof}
 Since the minimal polynomial of $\alpha$ over $K$ is $\alpha^2-a=0$, the Galois action sends $\alpha$ to $-\alpha$. The proof
  is analogous to the characteristic $2$ case.

For $\und{L}^\fix \Box \und{L}^\fix$ we get the following diagram:
  \[ \xymatrix@R=0.5cm{
      \und{L}^\fix \Box \und{L}^\fix(C_2/C_2):   & (K \otimes_{\Z} K \oplus (L \otimes_{\Z}L)_{\text{Weyl}})/\text{FR} \ar@/_3ex/[dd]_{\res} \\
      & \\
\und{L}^\fix \Box \und{L}^\fix(C_2/e):  & L \otimes_{\Z} L. \ar@<0.8ex>@/_3ex/[uu]_{\tran} \ar@<-0.8cm>@/_3ex/[uu]_{\norm}
    } \] 
  The Weyl relation identifies $[\lambda \alpha \otimes \mu \alpha]$ with itself and kills $[\lambda \alpha \otimes
  \mu]$ and $[\lambda \otimes \mu\alpha]$.  Frobenius reciprocity confirms $[\lambda \otimes \mu \alpha] =
  \lambda \otimes \trace(\mu\alpha) = 0$ 
  and also $[\lambda \alpha \otimes \mu] = 0$.  On scalars we obtain
\begin{equation}
\label{eq:3}
 [\lambda \otimes \mu] = \lambda \otimes \trace(\mu) =   \lambda \otimes 2 \mu = 2 \lambda \otimes \mu.
\end{equation}
  Elements like $[\lambda \alpha \otimes \mu \alpha]$ survive unharmed. So we have 
 \[ \xymatrix@R=0.5cm{
      \und{L}^\fix \Box \und{L}^\fix(C_2/C_2):   & K \otimes_{\Z} K \oplus \{[\lambda \alpha \otimes \mu \alpha], \lambda, \mu \in K\}\ar@/_3ex/[dd]_{\res} \\
      & \\
\und{L}^\fix \Box \und{L}^\fix(C_2/e):  & L \otimes_{\Z} L. \ar@<0.8ex>@/_3ex/[uu]_{\tran} \ar@<-0.8cm>@/_3ex/[uu]_{\norm}
    } \] 
  Restriction sends $[\lambda \alpha \otimes \mu \alpha]$ to $2\lambda \alpha \otimes \mu \alpha$. The transfer and norm calculations follow directly.
 \end{proof}
 
  \begin{lem}\label{LKLneq2}
 If $K\subset L$ is a $C_2$-Galois extension and $K$ is a field of characteristic different than $2$, so $L = K(\alpha)$
 where 
$\alpha^2=a \in K^\times$,  then the Tambara functor 
  $  \und{L}^\fix \Box \und{K}^c \Box \und{L}^\fix$ is given by the diagram 
 \[ \xymatrix@R=0.5cm{
  \und{L}^\fix \Box \und{K}^c \Box \und{L}^\fix(C_2/C_2):   & K \otimes_{\Z} K \otimes_\Z K \oplus \{[\lambda \alpha \otimes \mu \otimes \nu \alpha], \lambda,\mu,\nu \in K\} \ar@/_3ex/[dd]_{\res} \\
      & \\
\und{L}^\fix \Box \und{K}^c \Box \und{L}^\fix(C_2/e):  & L \otimes_{\Z} K \otimes_\Z L, \ar@<0.8ex>@/_3ex/[uu]_{\tran} \ar@<-0.8cm>@/_3ex/[uu]_{\norm}
} \]
where restriction is inclusion on the left part and $\res[\lambda \alpha \otimes \mu \otimes \nu\alpha] = 2\lambda \alpha \otimes \mu \otimes \nu\alpha$.  The transfer sends $\lambda \alpha \otimes \mu \otimes \nu\alpha$ to $[\lambda \alpha \otimes \mu \otimes \nu\alpha]$, $\lambda \otimes \mu \otimes \nu$ to
$2\lambda \otimes \mu \otimes \nu$, and the remaining terms to zero.  The norm is applied componentwise as before.
\end{lem}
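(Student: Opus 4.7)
The plan is to mirror the proof of Lemma \ref{LKL} line by line, replacing the Artin--Schreier action $\tau(\alpha) = \alpha + 1$ by the Kummer action $\tau(\alpha) = -\alpha$. The key change in the structure maps of $\und{L}^\fix$ is that now $\tran(\alpha) = \alpha + \tau(\alpha) = 0$ and $\tran(1) = 2$, in contrast with $\tran(\alpha) = 1$ and $\tran(1) = 0$ in characteristic $2$.

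First I would compute the intermediate two-fold box product $\und{L}^\fix \Box \und{K}^c$ via Lewis's formula: the free level is $L \otimes_\Z K$, while the fixed level is $(K \otimes_\Z K \oplus (L \otimes_\Z K)_{\text{Weyl}})/\text{FR}$. The diagonal Weyl action fixes $[\lambda \otimes \mu]$ and flips the sign of $[\lambda\alpha \otimes \mu]$. Frobenius reciprocity, applied via $\tran(\lambda\alpha) = 0$ and $\tran(\lambda) = 2\lambda$ in $\und{L}^\fix$, then kills $[\lambda\alpha \otimes \mu]$ and identifies $[\lambda \otimes \mu]$ with $2\lambda \otimes \mu \in K \otimes_\Z K$. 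Hence the fixed level reduces to $K \otimes_\Z K$.

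Next I would box with $\und{L}^\fix$ on the right to obtain the three-fold product $(\und{L}^\fix \Box \und{K}^c) \Box \und{L}^\fix$. The free level is $L \otimes_\Z K \otimes_\Z L$, and the fixed level is a Weyl--FR quotient of $K \otimes_\Z K \otimes_\Z K \oplus (L \otimes_\Z K \otimes_\Z L)_{\text{Weyl}}$. The diagonal action fixes $\lambda \otimes \mu \otimes \nu$ and $\lambda\alpha \otimes \mu \otimes \nu\alpha$ (the two sign flips on $\alpha$ cancel) while negating the mixed monomials $\lambda\alpha \otimes \mu \otimes \nu$ and $\lambda \otimes \mu \otimes \nu\alpha$. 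Frobenius reciprocity then kills the mixed classes $[\lambda\alpha \otimes \mu \otimes \nu]$ and $[\lambda \otimes \mu \otimes \nu\alpha]$ (using $\tran(\lambda\alpha) = 0$) and identifies $[\lambda \otimes \mu \otimes \nu]$ with $2\lambda \otimes \mu \otimes \nu \in K \otimes_\Z K \otimes_\Z K$. The class $[\lambda\alpha \otimes \mu \otimes \nu\alpha]$ is the only genuinely new generator to survive.

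With the underlying Mackey structure in hand, the restriction, transfer, and norm formulas in the statement follow immediately. The restriction identity comes from $\res[\lambda\alpha \otimes \mu \otimes \nu\alpha] = \res\tran(\lambda\alpha \otimes \mu \otimes \nu\alpha) = \trace(\lambda\alpha \otimes \mu \otimes \nu\alpha)$, which equals $\lambda\alpha \otimes \mu \otimes \nu\alpha + (-\lambda\alpha) \otimes \mu \otimes (-\nu\alpha) = 2\lambda\alpha \otimes \mu \otimes \nu\alpha$. The transfer formulas are forced by the Weyl--FR identifications above: the pure scalar tensor transfers to $2\lambda \otimes \mu \otimes \nu$ inside $K \otimes_\Z K \otimes_\Z K$, the mixed monomials transfer to $0$, and $\lambda\alpha \otimes \mu \otimes \nu\alpha$ lifts to the surviving formal transfer class. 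Norms are computed componentwise on pure tensors as in Lemma \ref{LandLneq2}. The main obstacle is the same as in Lemma \ref{LKL}, namely the careful bookkeeping for Weyl and Frobenius reciprocity; here the sign-change Galois action makes the accounting marginally cleaner than in the affine Artin--Schreier case.
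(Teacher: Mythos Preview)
Your proposal is correct and follows essentially the same route as the paper: first reduce $\und{L}^\fix \Box \und{K}^c$ at the fixed level to $K \otimes_\Z K$ via the Weyl and Frobenius reciprocity relations, then box again with $\und{L}^\fix$ and perform the analogous reduction, leaving $K\otimes_\Z K\otimes_\Z K$ together with the surviving classes $[\lambda\alpha\otimes\mu\otimes\nu\alpha]$. The only cosmetic difference is that you attribute the vanishing of $[\lambda\alpha\otimes\mu]$ to Frobenius reciprocity (via $\tran(\lambda\alpha)=0$), whereas the paper phrases it as the Weyl relation killing this class; both justifications are valid here and lead to the same computation.
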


\begin{proof}
We start with
$\und{L}^\fix \Box \und{K}^c$, given by
  \[ \xymatrix@R=0.5cm{
      \und{L}^\fix \Box \und{K}^c(C_2/C_2):   & (K \otimes_{\Z} K \oplus (L \otimes_{\Z}K)_{\text{Weyl}})/\text{FR} \ar@/_3ex/[dd]_{\res} \\
      & \\
\und{L}^\fix \Box \und{K}^c(C_2/e):  & L \otimes_{\Z} K. \ar@<0.8ex>@/_3ex/[uu]_{\tran} \ar@<-0.8cm>@/_3ex/[uu]_{\norm}
    } \] 
Again, the Weyl relation kills $[\lambda \alpha \otimes \mu]$ and Frobenius reciprocity identifies $[\lambda \otimes \mu]$ with $\lambda \otimes \trace(\mu) = 2\lambda \otimes \mu$. So we are left with 
  \[ \xymatrix@R=0.5cm{
      \und{L}^\fix \Box \und{K}^c(C_2/C_2):   & K \otimes_{\Z} K \ar@/_3ex/[dd]_{\res} \\
      & \\
\und{L}^\fix \Box \und{K}^c(C_2/e):  & L \otimes_{\Z} K \ar@<0.8ex>@/_3ex/[uu]_{\tran} \ar@<-0.4cm>@/_3ex/[uu]_{\norm}
    } \] 
  with restriction, transfers, and norms analogous to previous calculations.
  \bigskip
  For 
$\und{L}^\fix \Box \und{K}^c \Box \und{L}^\fix$ we have 
  \[ \xymatrix@R=0.5cm{
  \und{L}^\fix \Box \und{K}^c \Box \und{L}^\fix(C_2/C_2):   & (K \otimes_{\Z} K \otimes_\Z K \oplus (L \otimes_{\Z}K \otimes_\Z L)_{\text{Weyl}})/\text{FR} \ar@/_3ex/[dd]_{\res} \\
      & \\
\und{L}^\fix \Box \und{K}^c \Box \und{L}^\fix(C_2/e):  & L \otimes_{\Z} K \otimes_\Z L. \ar@<0.8ex>@/_3ex/[uu]_{\tran} \ar@<-0.8cm>@/_3ex/[uu]_{\norm}
} \]
We get that $[\lambda \alpha \otimes \mu \otimes \nu\alpha]$ survives, whereas
$[\lambda \alpha \otimes \mu \otimes \nu]=0=[\lambda \otimes \mu \otimes \nu\alpha]$. Again, pure scalars $[\lambda \otimes \mu \otimes \nu]$ are identified with $2\lambda\otimes \mu\otimes \nu$ via Frobenius reciprocity, so this three-fold box product is 
\[ \xymatrix@R=0.5cm{
  \und{L}^\fix \Box \und{K}^c \Box \und{L}^\fix(C_2/C_2):   & K \otimes_{\Z} K \otimes_\Z K \oplus \{[\lambda \alpha \otimes \mu \otimes \nu \alpha], \lambda,\mu,\nu \in K\} \ar@/_3ex/[dd]_{\res} \\
      & \\
\und{L}^\fix \Box \und{K}^c \Box \und{L}^\fix(C_2/e):  & L \otimes_{\Z} K \otimes_\Z L \ar@<0.8ex>@/_3ex/[uu]_{\tran} \ar@<-0.8cm>@/_3ex/[uu]_{\norm}
} \]
and $\tran$ sends $\lambda \alpha \otimes \mu \otimes \nu\alpha$ to $[\lambda \alpha \otimes \mu \otimes \nu\alpha]$ and $\lambda \otimes \mu \otimes \nu$ to
$2\lambda \otimes \mu \otimes \nu$. It sends the remaining terms to zero. Restriction is inclusion on the left part and $\res[\lambda \alpha \otimes \mu \otimes \nu\alpha] = 2\lambda \alpha \otimes \mu \otimes \nu\alpha$. The norm is applied componentwise.

\end{proof} 
\begin{prop}
  \label{prop:c2-Kummercase}
Let $K$ be a field of characteristic $ \neq 2$ and $L$ be a $C_2$-Galois extension of $K$, so  $L = K(\alpha)$ with
$\alpha^2=a \in K^\times$.
Then the relative box product $ \und{L}^\fix \Box_{\und{K}^c} \und{L}^\fix$ is
\begin{equation*}
\xymatrix@R=0.5cm{
  \und{L}^\fix \Box_{\und{K}^c} \und{L}^\fix(C_2/C_2):   & K \otimes_{K} K  \oplus \{[\lambda \alpha \otimes \alpha], \lambda \in K\} \ar@/_3ex/[dd]_{\res} \\
      & \\
\und{L}^\fix \Box_{\und{K}^c} \und{L}^\fix(C_2/e):  & L \otimes_K L, \ar@<0.8ex>@/_3ex/[uu]_{\tran} \ar@<-0.8cm>@/_3ex/[uu]_{\norm}}
\end{equation*}
with structure maps as specified in the proof below. Moreover, we have: 
\begin{enumerate}
\item The ideal $\und{I}$ at the fixed level, $\und{I}(C_2/C_2)$, is spanned by  $2a - [\alpha\otimes \alpha]$.
\item The $C_2$-Tambara K\"ahler differentials $\Omega_{\und{L}^\fix|\und{K}^c}^{1,C_2}$ vanish.
\end{enumerate}
\end{prop}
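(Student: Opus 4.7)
The plan is to mirror the argument of Proposition \ref{prop:c2-AScase}, with the Artin--Schreier relation $\alpha^2 = \alpha + a$ replaced by the Kummer relation $\alpha^2 = a$, and to exploit that $2 \in K^\times$. I would begin by constructing $\und{L}^\fix \Box_{\und{K}^c} \und{L}^\fix$ as the coequalizer of the two $\und{K}^c$-actions applied to the box products computed in Lemmas \ref{LandLneq2} and \ref{LKLneq2}. At the free level this identifies $L \otimes_\Z L$ with $L \otimes_K L$; at the fixed level it identifies $K \otimes_\Z K$ with $K \otimes_K K \cong K$ and leaves a single family of transferred classes $\{[\lambda\alpha \otimes \alpha] : \lambda \in K\}$. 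I would then read off the structure maps from the lemmas: restriction is the inclusion on $K \otimes_K K$ with $\res[\lambda\alpha \otimes \alpha] = 2\lambda\alpha \otimes \alpha$; transfer satisfies $\tran(\lambda \otimes \mu) = 2\lambda \otimes \mu$, $\tran(\lambda\alpha \otimes \mu) = \tran(\lambda \otimes \mu\alpha) = 0$, and $\tran(\lambda\alpha \otimes \mu\alpha) = [\lambda\alpha \otimes \mu\alpha]$; and the norm acts componentwise with $\norm(\alpha) = -a$.

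Next I would identify $\und{I}(C_2/C_2)$. Since the multiplication $\mul$ is a map of Tambara functors and $\res$ is injective on $\und{L}^\fix$, the identity $\res(\mul[\lambda\alpha \otimes \alpha]) = \mul(\res[\lambda\alpha \otimes \alpha]) = \mul(2\lambda\alpha \otimes \alpha) = 2\lambda a$ forces $\mul[\lambda\alpha \otimes \alpha] = 2\lambda a$. An arbitrary element of $(\und{L}^\fix \Box_{\und{K}^c} \und{L}^\fix)(C_2/C_2)$ then has the form $k + [\lambda\alpha \otimes \alpha]$ with $k,\lambda \in K$, which multiplies to $k + 2\lambda a$. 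The kernel is therefore generated by $e := 2a - [\alpha \otimes \alpha]$ as claimed.

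The main computational step is showing $e \in \und{I}^2$, for which I use the Frobenius reciprocity formula for Green functors (see \cite[p.~19]{mazur}). Writing $[\alpha \otimes \alpha] = \tran(\alpha \otimes \alpha)$ and computing
\begin{align*}
[\alpha \otimes \alpha] \cdot [\alpha \otimes \alpha]
&= [(\alpha \otimes \alpha) \cdot \res\tran(\alpha \otimes \alpha)] \\
&= [(\alpha \otimes \alpha)(\alpha \otimes \alpha + (-\alpha) \otimes (-\alpha))] \\
&= [2\alpha^2 \otimes \alpha^2] = [2a^2 \otimes 1] = 4a^2,
\end{align*}
where the last equality uses $[\lambda \otimes \mu] = 2\lambda\mu$ in $K \otimes_K K$. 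Expanding $e^2$ then gives $e^2 = 4a^2 - 4a\,[\alpha \otimes \alpha] + 4a^2 = 4a\,e$. Since $4a \in K^\times$ in characteristic $\neq 2$, we conclude that $e \in \und{I}^2(C_2/C_2)$ and hence $\und{I}/\und{I}^2$ vanishes at the fixed level.

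Finally, at the free level $\und{I}(C_2/e) = \ker(L \otimes_K L \to L)$ is the kernel of multiplication for the classical separable extension $K \subset L$, so $\und{I}/\und{I}^2(C_2/e) = 0$ by ordinary commutative algebra. Since $\Omega_{\und{L}^\fix|\und{K}^c}^{1,C_2} = \und{I}/\und{I}^{>1}$ is a quotient of $\und{I}/\und{I}^2$, the Kähler differentials vanish. The only real subtlety is the correct interpretation of Frobenius reciprocity when iterating transferred elements; once the identity $\res\tran(\alpha \otimes \alpha) = 2\alpha \otimes \alpha$ is in hand, the idempotent-up-to-unit computation of $e^2$ is short.
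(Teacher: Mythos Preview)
Your proposal is correct and follows essentially the same route as the paper: both compute the coequalizer from Lemmas~\ref{LandLneq2} and~\ref{LKLneq2}, determine $\mul[\alpha\otimes\alpha]=2a$ via injectivity of $\res$, and then use the Frobenius reciprocity identity $[\alpha\otimes\alpha]^2=4a^2$ to obtain $e^2=4a\,e$ and conclude by invertibility of $4a$. The only cosmetic difference is that you package the conclusion as ``$e^2=4ae$'' whereas the paper writes out $(2a-[\alpha\otimes\alpha])^2$ and divides by $4a$; the content is identical.
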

\begin{proof} We use the results of Lemma \ref{LandLneq2} and Lemma \ref{LKLneq2} to calculate the coequalizer $\und{L}^\fix \Box_{\und{K}^c} \und{L}^\fix$, to obtain
\[ \xymatrix@R=0.5cm{
  \und{L}^\fix \Box_{\und{K}^c} \und{L}^\fix(C_2/C_2):   & K \otimes_{K} K  \oplus \{[\lambda \alpha \otimes \alpha], \lambda \in K\} \ar@/_3ex/[dd]_{\res} \\
      & \\
\und{L}^\fix \Box_{\und{K}^c} \und{L}^\fix(C_2/e):  & L \otimes_K L \ar@<0.8ex>@/_3ex/[uu]_{\tran} \ar@<-0.8cm>@/_3ex/[uu]_{\norm}.
    } \] 
The restriction on $K \otimes_K K$ is the inclusion map and 
  \begin{align*}
\res[\lambda \alpha \otimes \alpha] = & 2\lambda \alpha \otimes  \alpha,\\
    \tran(\lambda\cdot \alpha \otimes \alpha) & = [\lambda\cdot \alpha \otimes \alpha], \\
    \tran(\lambda\cdot 1 \otimes 1) & = 2\lambda, \\
  \tran(\lambda\cdot \alpha \otimes 1)& = \tran(\lambda\cdot 1 \otimes \alpha) = 0, \\
  \norm (\lambda\cdot \alpha \otimes \alpha) & = \lambda^2 a^2\\
  \norm(\lambda \cdot 1 \otimes 1 ) & = \lambda^2, \\
  \norm(\lambda\cdot \alpha \otimes 1) & = \norm (\lambda \cdot 1 \otimes \alpha) = -\lambda^2 a.
\end{align*}

 The multiplication map $\mul \colon \und{L}^\fix \Box_{\und{K}^c}
  \und{L}^\fix \ra \und{L}^\fix$ induces the ordinary multiplication on $L$ at
  the $C_2/e$-level. As $\mul$ is a map of Tambara functors, we obtain that
  $\mul$ induces the  multiplication on $K \otimes_K K$. As $\res[\alpha \otimes \alpha] = 2\alpha \otimes \alpha$
  and as this is sent to $2\alpha^2= 2a$ under the multiplication map, we know that
  $\mul[\alpha \otimes \alpha] = 2a$. Therefore $2a - [\alpha\otimes \alpha]$ generates $\und{I}(C_2/C_2)$.

\item As  $\und{I}(C_2/e)$ is the kernel of the multiplication map $L \otimes_K L \ra
  L$
  and as $L$ is \'etale, we know that $\und{I}/\und{I}^2(C_2/e) =0$, and hence $\Omega^{1,C_2}_{\und{L}^\fix|\und{K}^c}(C_2/e)=0$.

\noindent   
Observe that 
  \[(2a-[\alpha \otimes \alpha])^2 = 4a^2-4a[\alpha \otimes \alpha] + [\alpha \otimes \alpha]^2 = 4a^2-4a[\alpha \otimes \alpha]+4a^2. \]
  Here, we use
  \[[\alpha \otimes \alpha]^2 = [\alpha \otimes \alpha \cdot
    \res\, \trace(\alpha \otimes \alpha)] = [\alpha^2 \otimes \alpha^2 + (-\alpha^2) \otimes (-\alpha^2)] = 2[a \otimes a] = 4a^2 \text{ by\ Equation }\eqref{eq:3}.\]
As the characteristic is not $2$ and as $a$ is invertible in $K$, dividing by $4a$ yields that $2a - [\alpha \otimes \alpha]$ is in $\und{I}^2(C_2/C_2)$. 
 Hence  $\Omega^{1,C_2}_{\und{L}^\fix|\und{K}^c}(C_2/C_2)=0$. \qedhere

\end{proof}

\subsection{Projectivity}
To prove \'etaleness, we still need projectivity.   Again, the proof depends on the characteristic of $K$.
\begin{lem} \label{lem:proj2}  (Artin-Schreier case)
For any $C_2$-Galois extension $K \subset L$ where the characteristic of $K$ is $2$, the $C_2$-Tambara functor $\und{L}^\fix$ is projective over $\und{K}^c$. 
\end{lem}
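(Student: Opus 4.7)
The plan is to show that $\und{L}^\fix$ is in fact free as a $\und{K}^c$-module, which obviously implies projectivity. Specifically, I would prove that $\und{L}^\fix$ is isomorphic to the free $\und{K}^c$-module on a single generator at the orbit $C_2/e$; call this module $\und{F}$. Using Lewis' formula and Frobenius reciprocity in the same explicit style as in Lemmas~\ref{LandL} and \ref{LKL}, one computes that $\und{F}(C_2/e) = K\oplus K$ with Weyl action by coordinate swap, while $\und{F}(C_2/C_2) = K$, with $\res(k) = (k,k)$ and $\tran(a,b) = a+b$.

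The key observation is that in characteristic $2$ the pair $\{\alpha,\alpha+1\}$ is a $K$-basis of $L$ permuted by the Galois involution $\tau\colon\alpha\mapsto\alpha+1$. I would therefore define the $\und{K}^c$-module map $\phi\colon\und{F}\to\und{L}^\fix$ by sending the free generator at $C_2/e$ to $\alpha\in L=\und{L}^\fix(C_2/e)$; by the universal property of $\und{F}$ this determines $\phi$ uniquely as a $\und{K}^c$-module map. At level $C_2/e$, $\phi(a,b)=a\alpha+b(\alpha+1)$, which is a $C_2$-equivariant $K$-linear bijection onto $L$. At level $C_2/C_2$, compatibility with transfer forces $\phi(k)=k\cdot\tran_{\und{L}^\fix}(\alpha)=k$, since $\tran(\alpha)=\alpha+(\alpha+1)=1$; this is the identity on $K$.

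It remains to verify that $\phi$ is compatible with restriction and transfer. In characteristic $2$, $\res_{\und{L}^\fix}(k)=k=k\alpha+k(\alpha+1)$, matching $\res_{\und{F}}(k)=(k,k)$; and $\tran_{\und{L}^\fix}(a\alpha+b(\alpha+1))=(a+b)(2\alpha+1)=a+b$, matching $\tran_{\und{F}}(a,b)=a+b$. Hence $\phi$ is an isomorphism of $\und{K}^c$-modules at both levels, so $\und{L}^\fix$ is free over $\und{K}^c$, and in particular projective.

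The main obstacle, though largely routine given the earlier lemmas, is the bookkeeping needed to compute $\und{F}(C_2/C_2)$: one has to check that the Weyl identification and the two Frobenius reciprocity relations collapse the transfer-generated part onto the natural summand, leaving just $K$. Once this is established, the remainder of the argument reduces to the observation that $\{\alpha,\alpha+1\}$ is a Galois-equivariant $K$-basis of $L$, which is special to characteristic $2$ (the identity $\alpha+\tau(\alpha)=1$ is what makes the transfer of $\alpha$ a unit in $K$ and fuels the whole isomorphism).
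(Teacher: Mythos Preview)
Your proof is correct. Both your argument and the paper's rest on the same key identity: in characteristic~$2$, $\tran(\alpha)=\alpha+\tau(\alpha)=1$, so a $\und{K}^c$-module map out of $\und{L}^\fix$ is completely determined by the image of $\alpha\in\und{L}^\fix(C_2/e)$. The paper uses this to solve the lifting problem directly: given an epimorphism $\pi\colon\und{M}\to\und{Q}$ and a map $\zeta\colon\und{L}^\fix\to\und{Q}$, choose any preimage $m$ of $\zeta(\alpha)$ under $\pi(C_2/e)$, set $\xi(C_2/e)(\alpha)=m$ and $\xi(C_2/C_2)(1)=\tran(m)$, and verify compatibility with $\res$ and $\tran$. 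You instead recast the same universal property as freeness on one generator at $C_2/e$ and exhibit the isomorphism $\und{F}\cong\und{L}^\fix$ explicitly. Your route yields the stronger conclusion that $\und{L}^\fix$ is actually free over $\und{K}^c$, at the cost of the extra bookkeeping needed to compute $\und{F}(C_2/C_2)$; the paper's direct lift is shorter but leaves the freeness implicit.
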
  
\begin{proof}
The characteristic of $K$ is $2$, so we know that $L= K(\alpha)$ with $\alpha^2+\alpha+a=0$ for some $a \in K$ and $\tau(\alpha) = \alpha +1$. Assume that 
$\pi \colon \und{M} \ra \und{Q}$ is an epimorphism of $C_2$-Mackey functors that are $\und{K}^c$-modules. In particular, the values on the orbits are $K$-vector spaces. Let $\zeta \colon \und{L}^\fix \ra \und{Q}$ be a morphism of $\und{K}^c$-modules. Consider the diagram
\[\xymatrix{
K \ar@/^5ex/[drr]^{\zeta(C_2/C_2)} & & \\
& \und{M}(C_2/C_2) \ar[r]^{\pi(C_2/C_2)}\ar@<-0.5cm>[d]_\res& \und{Q}(C_2/C_2) \ar@<-0.5cm>[d]_\res\\ 
& \und{M}(C_2/e) \ar[r]^{\pi(C_2/e)}  \ar@<-0.5cm>[u]_\tran& \und{Q}(C_2/e)\ar@<-0.5cm>[u]_\tran\\ 
L \ar@/_5ex/[urr]_{\zeta(C_2/e)}&&
  } \]
We define $\xi\colon \und{L}^\fix \ra \und{M}$ as follows: We set $\tilde{m} := \zeta(C_2/e)(\alpha) \in \und{Q}(C_2/e)$ and choose a preimage $m$ of $\tilde{m}$ under $\pi(C_2/e)$. We define $\xi(C_2/e)(\alpha):= m$. Then $1 = \alpha + \tau \alpha$ is sent to
\[ \xi(C_2/e)(\alpha + \tau \alpha) = m + \tau m. \]

We define $\xi(C_2/C_2)(1) := \tran(m)$. Then
\[ \res(\xi(C_2/C_2)(1)) = \res \, \tran(m) = m + \tau m = \xi(C_2/e)(1) = \xi(C_2/e)(\res(1))\]
and by construction $\xi$ commutes with $\tran$. 
\end{proof}

\begin{lem} \label{lem:projneq2}   (Kummer case)
For any $C_2$-Galois extension $K \subset L$ where the characteristic of $K$ is not $2$, the $C_2$-Tambara functor $\und{L}^\fix$ is projective over $\und{K}^c$. 
\end{lem}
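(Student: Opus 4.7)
The plan is to imitate the argument in Lemma~\ref{lem:proj2}, the only genuinely new ingredient being a correction term that exploits the invertibility of~$2$. Let $\pi \colon \und{M} \ra \und{Q}$ be an epimorphism of $\und{K}^c$-modules and $\zeta \colon \und{L}^\fix \ra \und{Q}$ a morphism; the goal is to construct a lift $\xi \colon \und{L}^\fix \ra \und{M}$. As a $\und{K}^c$-module, $\und{L}^\fix(C_2/C_2) = K$ is generated by $1$ and $\und{L}^\fix(C_2/e) = L = K \oplus K\alpha$ is generated by $\{1, \alpha\}$, so specifying $\xi$ reduces to choosing $n := \xi(C_2/C_2)(1)$ and $m := \xi(C_2/e)(\alpha)$, with $\xi(C_2/e)(1) = \res(n)$ then forced.

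First I would pick any lift $n \in \und{M}(C_2/C_2)$ of $\zeta(C_2/C_2)(1)$. The transfer compatibility $\tran(\res(n)) = 2n$ required at the generator $1 \in L$ is automatic in any $\und{K}^c$-module: it follows from the Frobenius-type relation $\tran_{\und{K}^c}(1) \cdot n = \tran_{\und M}(\res(n))$ together with $\tran_{\und{K}^c}(1) = 2$.

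Next I would pick any lift $m_0 \in \und{M}(C_2/e)$ of $\zeta(C_2/e)(\alpha)$. Since $\tran_{\und{L}^\fix}(\alpha) = \alpha + \tau(\alpha) = 0$, the element $\tran(m_0)$ lies in $\ker \pi(C_2/C_2)$ but need not itself vanish. To correct this, set
\[ m := m_0 - \tfrac{1}{2}\,\res\bigl(\tran(m_0)\bigr), \]
which is the point where the hypothesis that the characteristic of $K$ is not $2$ enters. Then $\pi(m) = \zeta(C_2/e)(\alpha)$, since the correction lies in $\ker\pi(C_2/e)$, and the identity $\tran \circ \res = 2 \cdot \id$ on $\und{M}(C_2/C_2)$ gives $\tran(m) = \tran(m_0) - \tran(m_0) = 0$.

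Finally I would define $\xi(C_2/e)(\lambda + \mu\alpha) := \lambda \res(n) + \mu m$ for $\lambda,\mu \in K$ and verify the Mackey compatibilities. Commutation with $\res$ and $\tran$ holds on generators by construction; the only remaining $C_2$-equivariance check is at $\alpha$, where one needs $\tau(m) = -m$, and this is immediate from $m + \tau(m) = \res(\tran(m)) = 0$. I do not foresee a substantive obstacle beyond the correction step, whose necessity is precisely the reason the Kummer case has to be separated from the Artin--Schreier one.
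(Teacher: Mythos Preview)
Your argument is correct. The correction $m := m_0 - \tfrac{1}{2}\res(\tran(m_0))$ is exactly what is needed: it preserves the lifting property because $\tran(\pi(m_0)) = \tran(\zeta(C_2/e)(\alpha)) = \zeta(C_2/C_2)(\tran(\alpha)) = 0$, and it forces $\tran(m)=0$ via $\tran\circ\res = 2\cdot\id$, after which equivariance at $\alpha$ follows from $\res\circ\tran = 1+\tau$.

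The paper takes a different, more structural route. Rather than build an explicit lift, it observes that when $2 \in K^\times$ every $\und{K}^c$-module $\und{M}$ splits functorially as $\und{M}^+ \oplus \und{M}^-$ into $\tau$-eigenspaces, with $\und{M}^-(C_2/C_2)=0$ forced. Under this splitting $(\und{L}^\fix)^+ \cong \und{K}^c$ is free and $(\und{L}^\fix)^-$ is concentrated at the free orbit with value $K$, so each summand is visibly projective. Your approach is closer in spirit to the Artin--Schreier argument of Lemma~\ref{lem:proj2} and is arguably the more elementary of the two; the paper's eigenspace decomposition, on the other hand, is what generalizes cleanly to the $C_n$-Kummer setting later (Lemma~\ref{lem:decompose} and Corollary~\ref{cor:decompose}), where the analogous hands-on correction would become unwieldy.
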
  
\begin{proof}

  Since the characteristic of $K$ is not equal to $2$, $\tran(1) = 2$ is invertible. We have $L = K(\alpha)$ with $\alpha^2=a\in K$ and $\tau(\alpha)=-\alpha$. In particular, the transfer on $\alpha$ is zero.

  We claim that any $\und{K}^c$-module $\und{M}$ can be canonically decomposed
as  $\und{M} = \und{M}^+ \oplus \und{M}^-$  in the following way.
  As $2 \in K^\times$ we can split all $C_2$-representations into the
  $\pm$-eigenspaces of the $\tau$-action via the usual trick of writing $x =
  \frac{x+\tau x}{2} + \frac{x -\tau x}{2}$.
So we can write $\und{M}(C_2/e) = \und{M}^{+}(C_2/e) \oplus
    \und{M}^-(C_2/e)$.
    For $x \in \und{M}^{-}(C_2/e)$, we have  $\tran(x) = \tran(\tau x) = - \tran(x)$ and
    hence $\tran(x) = 0$.
    Define $\und{M}^+(C_2/C_2) = 
    \und{M}(C_2/C_2)$ and $\und{M}^-(C_2/C_2) = 0$.  Then $\und{M}^+$ and
    $\und{M}^-$ are sub-$C_2$-Mackey-functors of $\und{M}$ and $\und{K}^c$-modules. 

Our lifting diagram looks as follows:

\[\xymatrix{
K = K \oplus 0 \ar@/^5ex/[drr]^{\zeta(C_2/C_2)} & & \\
& \und{M}^+(C_2/C_2) \oplus \und{M}^-(C_2/C_2) \ar[r]^{\pi(C_2/C_2)}\ar@<-0.5cm>[d]_\res& \und{Q}^+(C_2/C_2) \oplus \und{Q}^-(C_2/C_2) \ar@<-0.5cm>[d]_\res\\ 
& \und{M}^+(C_2/e)  \oplus \und{M}^-(C_2/e) \ar[r]^{\pi(C_2/e)}  \ar@<-0.5cm>[u]_\tran& \und{Q}^+(C_2/e)  \oplus \und{Q}^-(C_2/e)\ar@<-0.5cm>[u]_\tran\\ 
K \oplus \{\lambda\alpha, \lambda \in K\} \ar@/_5ex/[urr]_{\zeta(C_2/e)}&&
  } \]
 It suffices to show that both $(\und{L}^{\fix})^+$ and
  $(\und{L}^{\fix})^-$ are projective. 
    For the positive part, $(\und{L}^\fix)^+
  \cong \und{K}^c$ is a free $\und{K}^c$-module, so in
particular projective. 
For the negative part, the only nontrivial module is at the $C_2/e$-level, and as $(\und{L}^{\fix})^-(C_2/e) = K$ is projective, we can solve the lifting problem.
\qedhere
\end{proof}

\begin{proof}[Proof of Theorem~\ref{thm:c2-flat}]
  In Propositions~\ref{prop:c2-AScase} and \ref{prop:c2-Kummercase}, we proved that
  $\und{K}^c \ra \und{L}^\fix$ has vanishing K\"ahler differentials.
In Lemma~\ref{lem:proj2} and Lemma~\ref{lem:projneq2}, we proved that $\und{K}^c \ra \und{L}^\fix$ is
projective.
As projective $\und{K}^c$-modules are flat (see \cite[Corollary 6.4]{LewisPiF}
or \cite[Prop 2.2.13]{leeman}),
we know that $\und{K}^c \ra \und{L}^\fix$ is flat.
\end{proof}
\section{Tambara \'etaleness of $C_n$-Kummer extensions}

By the above discussion we already know that $C_2$-Kummer extensions
are Tambara \'etale. We generalize this to arbitrary finite cyclic
groups: Let $K \subset L$ be a $C_n$-Kummer extension
\cite[p.~89]{birch}, hence $n$ is invertible in $K$ and the polynomial
$X^n-1$ splits in $K$. We denote by $\zeta_n$ a primitive
$n$th root of unity. We can assume that $L$ is of the form $L =
K(\alpha)$ with  $\alpha^n =a \in K^{\times}$, and
that the generator $\sigma$ of the Galois group $C_n$
  acts on $\alpha$ via multiplication by $\zeta_n$.

We start with a result that helps to prove flatness. The proof generalizes the one of Lemma
\ref{lem:projneq2}.

\begin{lem}
  \label{lem:decompose}
 Let $K$ be a field containing a primitive $n$th root of unity $\zeta
 = \zeta_n$ with $n$ invertible in $K$.
  Then any
 $\und{K}^c$-module $\und{M}$ can be decomposed uniquely and functorially as $\bigoplus_{i=0}^{n-1}
 \und{M}^{\zeta^i}$, characterized by the following properties that for any subgroup $C_m \leq C_n$,
\begin{enumerate}
\item the generator of $C_n/C_m$ acts on
  $\und{M}^{\zeta^i}(C_n/C_m)$ as multiplication by $\zeta_n^i$;
\item  if $m\nmid i$,  then $\und{M}^{\zeta^i}(C_n/C_m) = 0$;
\item if $m\mid i$, then both $\res_e^{C_m}$ and $\tran_{e}^{C_m}$ in
  $ \und{M}^{\zeta^i}$  are isomorphisms.
\end{enumerate}
\end{lem}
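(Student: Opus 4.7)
The plan is to construct the decomposition via idempotents in the group algebra $K[C_n]$ and then verify compatibility with the Mackey-functor structure and the $\und{K}^c$-action. Since $n \in K^\times$ and $K$ contains $\zeta_n$, the usual orthogonal idempotents
\[
e_i = \frac{1}{n} \sum_{j=0}^{n-1} \zeta_n^{-ij} \sigma^j \in K[C_n], \qquad i = 0, \ldots, n-1,
\]
where $\sigma$ is a fixed generator of $C_n$, sum to $1$ and satisfy $\sigma e_i = \zeta_n^i e_i$. A short coset computation shows that the image of $e_i$ under $K[C_n] \twoheadrightarrow K[C_n/C_m]$ vanishes when $m \nmid i$ and equals the projector onto the $\zeta_n^i$-eigenspace of the Weyl generator $\bar\sigma$ when $m \mid i$. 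I would then set $\und{M}^{\zeta^i}(C_n/C_m) := e_i \cdot \und{M}(C_n/C_m)$ via the Weyl action, so properties (1) and (2) are immediate.

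To confirm that $\und{M}^{\zeta^i}$ is a sub-$\und{K}^c$-module, it suffices to show that $\res_e^{C_m}$ and $\tran_e^{C_m}$ respect the eigendecomposition. This follows from the standard Mackey-functor compatibility: both maps are $C_n$-equivariant, where $C_n$ acts on $\und{M}(C_n/C_m)$ via the quotient $C_n \twoheadrightarrow C_n/C_m$, so each sends the $\zeta_n^i$-eigenspace to the $\zeta_n^i$-eigenspace; when $m \nmid i$, the target eigenspace at the $C_n/C_m$-level is zero, forcing the transfer of any $\zeta_n^i$-eigenvector to vanish. The $\und{K}^c$-linearity is automatic because $\und{K}^c$ carries the trivial Weyl action, so its action commutes with $\sigma$ and hence with the $e_i$.

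For property (3), with $m \mid i$, I would apply Frobenius reciprocity for $\und{K}^c$-modules to obtain $\tran_e^{C_m} \circ \res_e^{C_m} = \tran_e^{C_m}(1) \cdot (-) = m \cdot (-)$ on $\und{M}(C_n/C_m)$, and the double-coset formula to obtain $\res_e^{C_m} \circ \tran_e^{C_m} = \sum_{g \in C_m} g \cdot (-)$ on $\und{M}(C_n/e)$. Restricted to the $\zeta_n^i$-eigenspace with $m \mid i$, each element $g = \sigma^{k n/m} \in C_m$ acts as $\zeta_n^{i k n/m} = 1$, so the sum collapses to $m$. Because $m \mid n$ and $n \in K^\times$, $m$ is a unit in $K$, and hence $\res_e^{C_m}$ and $\tran_e^{C_m}$ restrict to isomorphisms between $\und{M}^{\zeta^i}(C_n/C_m)$ and $\und{M}^{\zeta^i}(C_n/e)$. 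Uniqueness and functoriality are formal: the $e_i$ are canonical elements acting naturally on any $\und{K}^c$-module, and properties (1)--(3) force $\und{M}^{\zeta^i}(C_n/C_m)$ to coincide with the $\zeta_n^i$-eigenspace of $\bar\sigma$.

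The main obstacle I anticipate is the careful bookkeeping for the Weyl-equivariance of $\res$ and $\tran$ — this is where the $C_2$ argument of Lemma~\ref{lem:projneq2} (splitting $\und{M}$ into $\pm$-eigenspaces via $\tau$) genuinely generalizes, and one must get the conventions straight for all intermediate Mackey-structure maps, not just those between $e$ and $C_m$.
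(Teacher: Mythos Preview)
Your proposal is correct and is essentially the paper's own argument: both construct $\und{M}^{\zeta^i}$ via the standard projectors $e_i = \frac{1}{n}\sum_j \zeta_n^{-ij}\sigma^j$ (the paper writes this out as a formula for $x$ rather than as an idempotent), use $C_n$-equivariance of $\res$ and $\tran$ to see these are sub-Mackey functors, and verify (3) by computing $\tran\circ\res = m$ via Frobenius reciprocity and $\res\circ\tran = \sum_{g\in C_m} g = m$ on the $\zeta_n^i$-eigenspace when $m\mid i$. The only cosmetic difference is that you package the decomposition in idempotent language and flag the intermediate structure maps as a bookkeeping point, whereas the paper treats all $\res$/$\tran$ at once by noting they are $C_n$-equivariant.
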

\begin{proof}
(1) Any $C_n$-module $M$ can be decomposed into the
$\zeta_n^i$-eigenspaces  $\bigoplus_{i=0}^{n-1} M^{\zeta_n^i}$ using the identity
\[ x = \sum_{i = 0}^{n-1}
  \frac{\sum_{j=0}^{n-1}\zeta_n^{-ij}\sigma^{j}(x)}{n}. \]
We consider $\und{M}(C_n/C_m)$ with the $C_n$-action via $C_n \to
C_n/C_m = W_{C_n}(C_m)$. In this way, the restriction maps and the transfer maps are
$C_n$-equivariant. Setting  
$\und{M}^{\zeta^i}(C_n/C_m) = \und{M}(C_n/C_m)^{\zeta_{n}^i}$, we obtain
sub-Mackey functors $\und{M}^{\zeta^i}$ of $\und{M}$.

(2) As $\und{M}(C_n/C_m)$
is $C_m$-fixed, we obtain  $\und{M}^{\zeta^i}(C_n/C_m) = 0$ if $\zeta_{n}^i$ is not of the
form of $\zeta_{n/m}^j$ for some $j$, or equivalently, if $ m \nmid i$.

(3) For a $\und{K}^c$-Mackey functor,   $\tran_{e}^{C_m} \circ
\res_e^{C_m} = m$ (as noticed by \cite[Remark 2.13]{zeng}). This holds because
by Frobenius reciprocity, $\tran_{e}^{C_m} (\res_e^{C_m}(x)) =  \tran_{e}^{C_m}
(1) \cdot x = m \cdot x$.
For $m \mid i$ we get that
\[\res_e^{C_m}(\tran_{e}^{C_m}(x)) = (1+(\zeta_n^i)^{n/m} +
  (\zeta_n^i)^{2n/m} + \cdots + (\zeta_n^i)^{(m-1)n/m})\cdot x = m \cdot x.\]
As $n$  is invertible in $K$,  so is $m$.  This shows that $\tran$ and
  $\res$ are inverses of each other up to multiplication by a unit. 
\end{proof}

\begin{ex} \label{ex:Lfix-decompose}
  For a $C_n$-Kummer extension $K \subset L = K(\alpha)$ with  $\alpha^n =a \in K^{\times}$, for $0 \leq i < n$,
  \[(\und{L}^{\fix})^{\zeta^i} (C_n/C_m)
  =\begin{cases}
     K\{\alpha^{i}\} & m \mid i\\
     0 & m \nmid i.
   \end{cases} \]
 For $d \mid m \mid i$, $\res^{C_m}_{C_d}(\alpha^i) = \alpha^i$; for $d \mid i$ and $d \mid m$,  
 \[ 
   \tran_{C_d}^{C_m}(\alpha^i) = \begin{cases}
                               \frac{m}{d} \alpha^i & m \mid i \\
                               0 & m \nmid i.
\end{cases}\]
\end{ex}

\begin{cor}
  \label{cor:decompose}
 Let $K$ be a field containing a primitive $n$th root of unity $\zeta_n$ where $n$
 is invertible in $K$. Then: 
\begin{enumerate}
\item  Evaluation
 at the $C_n/e$-level gives an  equivalence 
between the category of 
$\und{K}^c$-modules in $C_n$-Mackey functors and the category of
$K[C_n]$-modules.
\item All $\und{K}^c$-modules are projective. 
\end{enumerate}
\end{cor}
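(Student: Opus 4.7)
The plan is to deduce both parts from Lemma~\ref{lem:decompose}, which already pins down the full Mackey structure of any $\und{K}^c$-module in terms of its $C_n/e$-level and isotypic decomposition.

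For part (1), I would build an explicit quasi-inverse to evaluation at $C_n/e$. Given a $K[C_n]$-module $N$, decompose it into $\sigma$-eigenspaces $N = \bigoplus_{i=0}^{n-1} N^{\zeta^i}$, possible because $n$ is invertible in $K$ and $\zeta_n \in K$. Define a $C_n$-Mackey functor $\und{N}$ by
\[ \und{N}(C_n/C_m) = \bigoplus_{m \mid i} N^{\zeta^i} = N^{C_m}, \]
with $\res_{C_d}^{C_m}$ the inclusion of summands and $\tran_{C_d}^{C_m}$ the map $\sum_{g \in C_m/C_d} g \cdot (-)$, which acts as multiplication by $m/d$ on eigenspaces with $m \mid i$ and as zero on those with $m \nmid i$. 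The Mackey axioms split along isotypic components and reduce to elementary index arithmetic, greatly simplified by $C_n$ being abelian; the $\und{K}^c$-module structure is inherited from the $K$-vector space structure on each $N^{\zeta^i}$. Lemma~\ref{lem:decompose} then says exactly that every $\und{K}^c$-module $\und{M}$ is canonically isomorphic to the reconstruction of its $C_n/e$-level, so evaluation is essentially surjective. Fully faithfulness follows because a $\und{K}^c$-linear map between reconstructed Mackey functors is determined on each isotypic summand, hence by its value at $C_n/e$.

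Part (2) reduces, via the equivalence of (1), to the classical fact that every $K[C_n]$-module is projective. Since $n$ is invertible in $K$, Maschke's theorem gives that $K[C_n]$ is semisimple; moreover, since $\zeta_n \in K$, the orthogonal idempotents $e_i = \frac{1}{n} \sum_{j=0}^{n-1} \zeta_n^{-ij} \sigma^j$ exhibit $K[C_n]$ as the product $\prod_{i=0}^{n-1} K$, so every module is a direct sum of one-dimensional simples and in particular projective.

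The step I expect to be most delicate is confirming that the reconstruction in part (1) satisfies the normalization of Lemma~\ref{lem:decompose}(3), namely that on each isotypic summand with $m \mid i$ the composition $\tran_e^{C_m} \circ \res_e^{C_m}$ really comes out to multiplication by $m$; once that bookkeeping is in place, both directions of the equivalence and the projectivity conclusion follow with essentially no further work.
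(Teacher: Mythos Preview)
Your proof is correct and follows essentially the same route as the paper. The explicit quasi-inverse you build is precisely the fix functor $N \mapsto \und{N}^{\fix}$ already introduced in the preliminaries, and the paper phrases part~(1) as the natural isomorphism $\und{M} \cong (\und{M}(C_n/e))^{\fix}$; your isotypic bookkeeping and the paper's appeal to Lemma~\ref{lem:decompose} amount to the same argument, with part~(2) handled identically via semisimplicity of $K[C_n]$.
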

\begin{proof}
(1)  By Lemma~\ref{lem:decompose}, there are natural isomorphisms of
  $\und{K}^c$-modules $\und{M} \cong (\und{M}(C_n/e))^{\fix}$ for all $\und{M}$.
 Lemma~\ref{lem:decompose} implies that
  this isomorphism is defined to be the identity at the $C_n/e$-level and propagates
  to the other levels as the relevant non-trivial structure maps are all 
  isomorphisms.

(2) Let $\und{M}$ be a $\und{K}^c$-module. By the previous part, in order to
  show that it is projective, it suffices to show that $\und{M}(C_n/e)$ is a
  projective $K[C_n]$-module. But from the assumption on $K$, all
  $K[C_n]$-modules split as sums of one-dimensional ones, which are projective. 
\end{proof}

\begin{thm}
  \label{sec:tamb-etal-c_n}
  Let $K\subset L$ be a $C_n$-Kummer extension. Then $\und{K}^{c} \to \und{L}^{\fix}$ is $C_n$-Tambara \'etale.
\end{thm}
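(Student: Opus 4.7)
The plan is to reduce both conditions of Tambara \'etaleness (flatness and vanishing of the K\"ahler differentials) to corresponding statements at the free orbit $C_n/e$, where everything becomes a classical fact about the field extension $K\subset L$. The only technology we need beyond the results already established is Corollary~\ref{cor:decompose}, which tells us that every $\und{K}^c$-module is projective and that the category of $\und{K}^c$-modules is equivalent to $K[C_n]$-modules via evaluation at $C_n/e$.

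First I would dispose of flatness. Since $\und{L}^{\fix}$ is a $\und{K}^c$-module, Corollary~\ref{cor:decompose}(2) says it is projective, hence flat over $\und{K}^c$. So it remains to prove that $\Omega^{1,C_n}_{\und{L}^{\fix}|\und{K}^c}=0$; as recalled in the introduction, it suffices to show the stronger statement $\und{I}=\und{I}^2$, where
\[
\und{I}=\ker\bigl(\mul\colon \und{L}^{\fix}\Box_{\und{K}^c}\und{L}^{\fix}\lra \und{L}^{\fix}\bigr).
\]

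The key point is that both $\und{I}$ and $\und{I}^2$ are sub-$\und{K}^c$-modules of the $\und{K}^c$-module $\und{L}^{\fix}\Box_{\und{K}^c}\und{L}^{\fix}$. By Corollary~\ref{cor:decompose}(1), any sub-$\und{K}^c$-module of a fixed $\und{K}^c$-module is determined by its value at the $C_n/e$-level, so it is enough to check the equality $\und{I}(C_n/e)=\und{I}^2(C_n/e)$ inside
\[
\bigl(\und{L}^{\fix}\Box_{\und{K}^c}\und{L}^{\fix}\bigr)(C_n/e)=L\otimes_K L.
\]
At this level, $\und{I}(C_n/e)$ is simply the kernel $I$ of the ordinary multiplication $L\otimes_K L\to L$, and $\und{I}^2(C_n/e)$ contains every product $x\cdot y$ with $x,y\in\und{I}(C_n/e)$, so $\und{I}^2(C_n/e)\supseteq I^2$. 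Conversely $\und{I}^2\subseteq\und{I}$, whence $\und{I}^2(C_n/e)\subseteq I$. Since $K\subset L$ is a (Galois, hence) \'etale extension of fields, the classical identity $I=I^2$ holds and therefore $\und{I}^2(C_n/e)=\und{I}(C_n/e)$.

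The step I expect to require the most care is confirming that $\und{I}^2$ is indeed a sub-$\und{K}^c$-module to which Corollary~\ref{cor:decompose}(1) applies, and, once that is in hand, that the equivalence of categories faithfully detects equality of subobjects at the bottom level (this is precisely the content of the equivalence, so the obstacle is essentially bookkeeping). No delicate calculations with norms at the $C_n/C_m$-levels are needed, because the equivalence of categories lifts the $C_n/e$-equality to all orbits automatically. Combining flatness with $\und{I}=\und{I}^2$ gives $C_n$-Tambara \'etaleness of $\und{K}^c\to\und{L}^{\fix}$.
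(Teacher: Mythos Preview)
Your argument is correct and takes a genuinely shorter route than the paper's. Both proofs establish flatness via Corollary~\ref{cor:decompose}(2) and both aim for the Green-functor statement $\und{I}=\und{I}^2$. The paper obtains this by an explicit level-by-level analysis: it writes down generators of $\und{I}(C_n/C_m)$ for each $m\mid n$ --- the classical kernel $I_m\subset K(\alpha^m)\otimes_K K(\alpha^m)$, the kernel of restriction to $C_n/e$, and mixed transfer classes $q\otimes\alpha^{(i+j)d}-[\alpha^{id}\otimes\alpha^{jd}]_d^m$ --- and then chases each generator into $\und{I}^2$ by direct manipulation. You instead invoke Corollary~\ref{cor:decompose}(1) a second time: since the inclusion $\und{I}^2\hookrightarrow\und{I}$ is a morphism of $\und{K}^c$-modules and an equivalence of categories reflects isomorphisms, the equality can be tested at $C_n/e$, where it becomes the classical $I=I^2$ for the \'etale field extension $K\subset L$. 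The point you flag as needing care --- that $\und{I}^2$ is a genuine sub-$\und{K}^c$-Mackey functor with $\und{I}^2(C_n/e)=I^2$ --- is unproblematic: restrictions of products are products and there is nothing below $e$ to transfer from. Your approach is cleaner and makes transparent that no feature of the Kummer presentation beyond the hypotheses of Corollary~\ref{cor:decompose} is really used; the paper's hands-on computation, while longer, has the side benefit of exhibiting $\und{I}$ concretely at every intermediate level.
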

\begin{proof}
Recall that $L = K(\alpha)$,  $\alpha^n =a \in K^{\times}$, $n \in K^\times$ and that the generator $\sigma$ of $C_n$
  acts on $\alpha$ via multiplication by $\zeta_n$, an $n$th root of unity.
  We 
  show that all elements in
  $\und{I}: = \ker(\mul \colon \und{L}^{\fix} \square_{\und{K}^c} \und{L}^{\fix} \to \und{L}^{\fix})$ are also in $\und{I}^2$. Note that when $n$ is not prime, there are 
  orbits other than the trivial one and the free one, and there are
  also transfer elements from there.

  In the following we use the
  formulas for the levels of a box product from \cite[Definition
  1.2.1]{mazur}. For a subgroup $C_m \leq C_n$ we have $L^{C_m} =  K(\alpha^m)$, and
\begin{align*}
  (\und{L}^{\fix} \square_{\und{K}^c}  \und{L}^{\fix})(C_n/C_m)
  & = K(\alpha^m) \otimes _K K(\alpha^m) \oplus (( \bigoplus_{d \mid m} K(\alpha^d) \otimes_K K(\alpha^d))_{\mathrm{Weyl}})/ \mathrm{FR}
\end{align*}
We will first determine $\und{I}(C_n/C_m)$.
Let $[x]_d^m$ denote the transfer class of $x$ from level $C_n/C_d$ to $C_n/C_m$ in
the box product and let $C_1 := e$.
\begin{itemize}
\item The kernel of the multiplication map
  $I_m:= \ker(\mul \colon K(\alpha^m) \otimes _K K(\alpha^m) \to K(\alpha^m)) $ is contained
in $\und{I}(C_n/C_m)$.
\item Let $q: = m/d$. 
  Note that $L^{C_d} \otimes_K L^{C_d}$ has a $C_m/C_d$-action, and the generator acts
  by  
\begin{equation*}
\sigma^{n/m}(\alpha^d) = (\zeta_n^{n/m})^{d}\cdot \alpha^d = \zeta_{m/d} \cdot \alpha^d = \zeta_q \cdot \alpha^d.
\end{equation*}
 
\begin{itemize}
\item  If $x$ is not  fixed  in $L^{C_d} \otimes_K L^{C_d}$, then 
\begin{equation*}
 \res^{C_m}_{C_d}([x]_d^m) = (1+ \zeta_q + \zeta_{q}^2 + \cdots + \zeta_{q}^{q-1} )x = 0.
\end{equation*}
Any element in the kernel of the restriction map to $C_n/e$
is automatically in $\und{I}(C_n/C_m)$  because the restriction map in
$\und{L}^{\fix}$ is injective.
\item If $x$ is fixed in $L^{C_d} \otimes_K L^{C_d}$, then $\res([x]_{d}^{m}) = q
  \cdot x \neq 0$. This occurs if $x$ is a $K$-linear combination of elements of the form $\alpha^{id} \otimes
  \alpha^{jd}$ for $q\mid(i+j)$. In these cases  
\[q \otimes \alpha^{(i+j)d} - [\alpha^{id} \otimes  \alpha^{jd}]_d^m \in  \und{I}(C_n/C_m). \]
Note that this makes sense because $m=qd$ divides $(i+j)d$. 
\end{itemize}
\end{itemize}

Therefore, 
$\und{I}(C_n/C_m)$ contains $I_m$ and $\ker(\res)$ and its other generators
are  of the form 
\begin{equation}
\label{eq:4}
q \otimes \alpha^{(i+j)d} - [\alpha^{id} \otimes  \alpha^{jd}]_d^m \text{ for } m=qd \text{ and } q\mid(i+j).
\end{equation}

\medskip
We need to prove that  $\Omega^{1,C_n}_{\und{L}^{\fix}
  |\und{K}^c}$ is trivial when evaluated on all $C_n$-sets $C_n/C_m$
for all subgroups $C_m \leq C_n$. We prove, as before,  that
  $\und{I}(C_n/C_m) = \und{I}^2(C_n/C_m)$.

First, as $K \subset K(\alpha^m)$ is a $C_{n/m}$-Kummer extension and has vanishing K\"ahler
differentials, we obtain that  $I_m =  I_m^2 \subset \und{I}^2(C_n/C_m)$.

Next, we show that if $z \in \und{I}(C_n/C_m)$ restricts to $0 \in
\und{I}(C_n/e)$,  then $z$ is in $\und{I}^2$.
This is true because the element $m \otimes \alpha^n - [\alpha \otimes \alpha^{n-1}]_1^m = ma-[\alpha \otimes \alpha^{n-1}]_1^m $
is in $\und{I}(C_n/C_m)$, and 
\begin{align*}
  z \cdot (m a - [\alpha \otimes \alpha^{n-1}]_1^m)  = ma\cdot z - z[\alpha \otimes \alpha^{n-1}]_1^m = ma\cdot z - [
  \res(z) \cdot \alpha \otimes \alpha^{n-1}]_1^m =  ma\cdot z
\end{align*}
agrees with $z$ up to a unit.

We are left with elements of the form $x_{i,i+j}: = q \otimes \alpha^{(i+j)d} - [\alpha^{id} \otimes
\alpha^{jd}]_d^m$ as in \eqref{eq:4}. Here, $q\mid(i+j)$ and $1 < i,j < n/d$, but we can  allow all $i,j \geq 0$, because of the  relations 
\[\begin{array}{rll}
  x_{0,t} = x_{t,t} & = 0 & \text{ by Frobenius reciprocity } \\
  x_{i+n/d,t} & = a x_{i,t} & \text{ when }i+n/d \leq t \\
  x_{i+n/d,t+n/d} & = a x_{i,t}. &  
\end{array}\]
Consider the element $1 \otimes \alpha^m - \alpha^m \otimes 1 $ in $I_m$. Then 
\begin{align*}
  &(1 \otimes \alpha^m - \alpha^m \otimes 1) \cdot x_{i,i+j} \\
  & = (1 \otimes \alpha^{qd} - \alpha^{qd} \otimes 1)\cdot(q \otimes \alpha^{(i+j)d} - [\alpha^{id} \otimes
  \alpha^{jd}]_d^m) \\
&  =  q \otimes \alpha^{(i+j+q)d} - [\alpha^{id} \otimes
\alpha^{(j+q)d}]_d^m - q \alpha^{qd} \otimes \alpha^{(i+j)d} - [\alpha^{(i+q)d} \otimes
                \alpha^{jd}]_d^m \\
&   = x_{i,i+j+q} - x_{i+q,i+j+q} +  q \otimes \alpha^{(i+j+q)d}   - q \alpha^{qd} \otimes \alpha^{(i+j)d}.
\end{align*}
As $q \otimes \alpha^{(i+j+q)d}   - q \alpha^{qd} \otimes \alpha^{(i+j)d}$ is in $I_m$ and thus in $I_m^2$, we get 
\begin{equation}
\label{eq:5}
x_{i,i+j+q} \equiv x_{i+q,i+j+q} \text{ mod } \und{I}^2.
\end{equation}
A similar computation shows 
$    x_{i,t} \cdot x_{i',t'} = x_{i,t+t'} + x_{i', t+t'} - x_{i+i',t+t'},$
so
\begin{equation}
\label{eq:6}
 x_{i,t} \equiv i \cdot x_{1,t} \text{ mod } \und{I}^2.
\end{equation}
Combining \eqref{eq:5} and \eqref{eq:6}, we have
\begin{equation*}
x_{0,t} \equiv x_{q,t} \equiv q \cdot x_{1,t} \text{ mod } \und{I}^2.
\end{equation*}
Now, $x_{0,t} = 0$ and $q$, being a factor of $n$, is invertible.
This proves that $x_{1,t}$ and thus $x_{i,t}$ is in $\und{I}^2$.
As $m,d$ are arbitrary in the proof, we have shown 
$\Omega^{1,C_n}_{\und{L}^{\fix} |\und{K}^c} = 0$.

  Projectivity of $\und{L}^\fix$ over $\und{K}^c$ can be shown again via the
    decomposition into eigenspaces,  as shown in
      Corollary~\ref{cor:decompose}. 

\end{proof}

\begin{rem}
  In all our examples the K\"ahler differentials vanish because the
  generators of $\und{I}$ can be shown to be elements of
  $\und{I}^2$. So all our examples are \'etale extensions of the
  underlying Green functors. 

  We could however in the $C_3$-case for instance kill $[\alpha^2 \otimes \alpha] - [\alpha \otimes \alpha^2]$ with a norm, as
  \[ \norm(1 \otimes \alpha -\alpha \otimes 1) =  [\alpha^2 \otimes \alpha] - [\alpha \otimes \alpha^2]. \] 

  For the $C_2$-Galois examples
  $1 \otimes 1 + [\alpha \otimes \alpha]$ can also be written as $\norm(1 \otimes \alpha + \alpha \otimes 1)$ in
  characteristic $2$, so it has two reasons to die. Similarly, in the
  $C_2$-Kummer case $2a -[\alpha \otimes \alpha] = \norm(1 \otimes \alpha - \alpha \otimes 1)$.
  
\medskip 
  One could of course hope that for any finite $G$-Galois extension $K \subset L$, the extension $\und{K}^c \ra \und{L}^\fix$ is $G$-Tambara \'etale, but such a claim would need a more conceptual proof than chasing generators of $\und{I}$ to their death. 
  \end{rem}

\section{Constant Tambara functors on \'etale extensions}
We assume that $K$ is a field of arbitrary characteristic and that $K \subset L$ is an \'etale extension. In particular, the underlying $K$-vector space of $L$ is finite dimensional. The aim of this section is to prove that $\und{K}^c \ra \und{L}^c$ is a $G$-Tambara extension for every finite group $G$. To this end we provide a proof of a probably well-known identification of box-products of constant Tambara functors:

\begin{lem}
  Assume that $A$ and $B$ are commutative rings. Then
 $\und{A}^c \Box \und{B}^c \cong \und{(A \otimes B)}^c$ as $G$-Tambara functors for every finite group $G$. 
\end{lem}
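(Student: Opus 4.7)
The plan is to exhibit a morphism $\phi \colon \und{A}^c \Box \und{B}^c \to \und{(A \otimes B)}^c$ and to check it is an isomorphism of $G$-Tambara functors. First, I would construct $\phi$ via the universal property of the box product in Green functors: it is enough to produce a bilinear pairing of Mackey functors $\und{A}^c \times \und{B}^c \to \und{(A \otimes B)}^c$. At every level $G/H$, take the $\Z$-bilinear map $A \times B \to A \otimes_\Z B$, $(a,b) \mapsto a \otimes b$. Compatibility with restrictions is immediate since all restrictions in these constant Tambara functors are identities. Frobenius reciprocity reduces to the identity
\[ \tran_H^K(a) \otimes b \;=\; [K:H]\,(a \otimes b) \;=\; a \otimes \tran_H^K(b), \]
which holds because transfers in all three constant Tambara functors are multiplication by the index $[K:H]$, and this value equals $\tran_H^K(a \otimes b)$ in $\und{(A \otimes B)}^c$.

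Second, I would upgrade $\phi$ from a Green-functor map to a Tambara-functor map by checking compatibility with norms. Since norms in $\und{A}^c$, $\und{B}^c$, and $\und{(A \otimes B)}^c$ are all given by $\norm_H^K(x) = x^{[K:H]}$, compatibility on pure tensors reduces to the multiplicative identity $(a \otimes b)^{[K:H]} = a^{[K:H]} \otimes b^{[K:H]}$. For sums and transfer classes, Tambara's norm formulas express norms as combinations of products and transfers (compare \cite[Corollary 2.6]{hm}), so compatibility follows from the Green-functor compatibility already established combined with the above identity.

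For the inverse, I would define $\psi \colon \und{(A \otimes B)}^c \to \und{A}^c \Box \und{B}^c$ at each level $G/H$ by $a \otimes b \mapsto a \otimes b$, where the right-hand side denotes the class of the pure tensor in the box product. Checking that $\psi$ is a Tambara map uses the same type of calculation, and both composites $\phi \circ \psi$ and $\psi \circ \phi$ are manifestly the identity on pure tensors. Alternatively, $\psi$ can be produced formally from the adjunction between the constant Tambara functor and evaluation at $G/G$, applied to the ring map $A \otimes B \to (\und{A}^c \Box \und{B}^c)(G/G)$ given by the universal pairing.

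The main obstacle is showing that $\psi$ is well-defined and that every element of $(\und{A}^c \Box \und{B}^c)(G/H)$ is in fact a pure tensor of the form $a \otimes b$. For this one invokes the explicit levelwise formula for the box product (generalizing \cite{lewis, mazur} to arbitrary finite $G$), which presents $(\und{A}^c \Box \und{B}^c)(G/H)$ as a direct sum indexed by conjugacy classes of subgroups of $H$, modulo Weyl-group actions and Frobenius reciprocity. In our situation, the Weyl actions on the constant values $A$ and $B$ are trivial, and Frobenius reciprocity identifies every transfer class $[a \otimes b]_L$ from a subgroup $L \leq H$ with the element $[H:L]\,(a \otimes b)$ at the top level. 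Hence every level collapses to a single copy of $A \otimes_\Z B$, and $\psi$ is an isomorphism onto this canonical description.
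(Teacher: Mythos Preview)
Your approach is correct but differs from the paper's. The paper does not compute the box product levelwise or construct explicit inverse maps; instead it invokes the fact (citing \cite{strickland}) that $\und{A}^c \Box \und{B}^c$ is the coproduct of $\und{A}^c$ and $\und{B}^c$ in the category of $G$-Tambara functors, and then verifies that $\und{(A\otimes B)}^c$ satisfies the same universal property. Concretely: a Tambara map $\und{A}^c \to \und{R}$ is the data of Weyl-equivariant ring maps $A \to \und{R}(G/H)$ compatible with $\res$, $\tran$, and $\norm$; the same description holds for $\und{B}^c$ and for $\und{(A\otimes B)}^c$, and since $\otimes$ is the coproduct of commutative rings these two packages of data coincide. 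This Yoneda-style argument is uniform in $G$ and sidesteps any explicit box-product formula. Your route, by contrast, yields a concrete levelwise identification, at the cost of leaning on the general levelwise description of the box product for arbitrary finite $G$, which is more intricate than the $C_p$ and $C_{p^n}$ cases you cite.

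One correction: your alternative construction of $\psi$ via an adjunction between $\und{(-)}^c$ and evaluation at $G/G$ is not valid as stated. That adjunction fails for Green and Tambara functors, since a ring map $R \to \und{T}(G/G)$ need not extend to a Tambara map $\und{R}^c \to \und{T}$: the obstruction is that $\tran_H^K(1)$ in $\und{T}$ is typically not the integer $[K:H]$ (take $\und{T}$ to be the Burnside Tambara functor). Your direct construction of $\psi$ does not use this, so the main argument stands.
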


\begin{proof}

We use that $\und{A}^c \Box \und{B}^c$ is the coproduct of $\und{A}^c$ and $\und{B}^c$ in the category of $G$-Tambara functors \cite[Lemma 9.8]{strickland}. 

  Let $\und{R}$ be an arbitrary $G$-Tambara functor and let $\varphi \colon \und{A}^c \ra \und{R}$ and $\psi \colon \und{B}^c \ra \und{R}$ be morphisms of $G$-Tambara functors. The unique induced morphism $\xi \colon \und{A}^c \Box \und{B}^c \ra \und{R}$ is determined by Weyl equivariant ring morphisms 
  \[ \xi(G/H) \colon A \otimes B = \und{A}^c(G/H) \otimes \und{B}^c(G/H) \ra \und{R}(G/H)\]
  for all $H < G$ 
  that satisfy compatibility constraints with respect to restriction, $\tran$ and $\norm$ coming from the definition of the box product as a Day convolution product. See \cite[Remark 1.2.3 and p.~41]{mazur} for an explicit list of requirements.
  
As the tensor product is the coproduct in the category of commutative rings, $\xi(G/H)$ is uniquely determined by ring maps $\xi_A(G/H) \colon A \ra \und{R}(G/H)$ and $\xi_B(G/H) \colon B \ra \und{R}(G/H)$. 

  As the action on the constant Tambara functors is trivial, the image of the $\xi(G/H)$ is contained in the Weyl fixed points. As restriction on $\und{A}^c$ and $\und{B}^c$ is the identity map, we get that for $H < K$, $\res_H^K \circ \xi(G/H) = \xi(G/K)$. The compatibility with $\tran$ demands that multiplication by the index in one tensor factor first and then applying $\xi$ is the same as first applying $\xi$ and then applying $\tran$ in $\und{R}$. Similarly, applying first the norm in both factors and then applying $\xi$ has to agree with first applying $\xi$ and then applying the norm in $\und{R}$.  These are exactly the requirements for obtaining a morphism of $G$-Tambara functors from $\und{(A\otimes B)}^c$ to $\und{R}$.

\end{proof}
\begin{cor}
For every \'etale extension $K \subset L$ where $K$ is a field, the extension $\und{K}^c \ra \und{L}^c$ is $G$-Tambara \'etale for every finite group $G$. 
\end{cor}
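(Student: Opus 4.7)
The plan is to reduce the verification to classical commutative algebra by applying the preceding lemma. First I would establish the identification
\[ \und{L}^c \Box_{\und{K}^c} \und{L}^c \cong \und{(L \otimes_K L)}^c. \]
The relative box product is a pushout in $G$-Tambara functors, and the preceding lemma already handles the coproduct (absolute) case. To pass from absolute to relative, one can either verify the universal property directly, using that a map $\und{R}^c \ra \und{T}$ of $G$-Tambara functors is essentially determined by a ring map $R \ra \und{T}(G/G)$ at the top level, or observe that the $\und{K}^c$-coequalizer relations imposed on $\und{L}^c \Box \und{L}^c \cong \und{(L \otimes L)}^c$ reproduce at each level exactly the relations defining $L \otimes_K L$ as a quotient of $L \otimes L$.

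Under this identification, the multiplication $\mul \colon \und{L}^c \Box_{\und{K}^c} \und{L}^c \ra \und{L}^c$ is the constant Tambara map induced by the classical multiplication $L \otimes_K L \ra L$, so its kernel $\und{I}$ is the constant Mackey functor on the classical ideal $I = \ker(L \otimes_K L \ra L)$. Because restrictions in $\und{(-)}^c$ are identities and transfers are multiplications by indices, the Green-functor square $\und{I}^2$ is computed pointwise as the ordinary ideal $I^2$ at every level. Classical \'etaleness of $K \subset L$ gives $I = I^2$, and hence $\und{I} = \und{I}^2 \subseteq \und{I}^{>1} \subseteq \und{I}$, so $\Omega^{1,G}_{\und{L}^c|\und{K}^c} = 0$.

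For flatness I would use that $K$ is a field and $K \subset L$ is \'etale, so $L$ is a finite product of finite separable field extensions of $K$, and in particular free of finite rank $n$ over $K$. A choice of $K$-basis $\{b_1,\ldots,b_n\}$ of $L$ yields an isomorphism $\und{L}^c \cong \bigoplus_{i=1}^{n} \und{K}^c$ of $\und{K}^c$-modules in $G$-Mackey functors, because the Mackey structure maps of $\und{L}^c$ are $K$-linear and respect this decomposition at every level. Hence $\und{L}^c$ is a free, and in particular flat, $\und{K}^c$-module, and $\und{K}^c \ra \und{L}^c$ is $G$-Tambara \'etale. The hard part will be this first identification; once it is in hand, the vanishing of K\"ahler differentials reduces immediately to classical \'etaleness of $K \subset L$, and flatness follows from a direct-sum argument applied to the constant Mackey functor.
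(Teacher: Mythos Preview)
Your proposal is correct and follows essentially the same route as the paper: identify $\und{L}^c \Box_{\und{K}^c} \und{L}^c \cong \und{(L\otimes_K L)}^c$ via the coequalizer description together with the preceding lemma, read off $\und{I} = \und{I}^c$ and conclude $\Omega^{1,G}_{\und{L}^c|\und{K}^c}=0$ from $I=I^2$, and obtain flatness from the splitting $\und{L}^c \cong \bigoplus \und{K}^c$ induced by a $K$-basis of $L$. The paper presents exactly these steps, so your outline matches it; your remark that the identification is ``the hard part'' slightly overstates the difficulty, since the coequalizer argument you sketch is already the full content.
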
  
\begin{proof}
  As $\und{K}^c$-modules in $G$-Mackey functors $\und{L}^c$ splits into a direct sum of $\und{K}^c$s and hence is free, thus flat.

  The coequalizer $\und{L}^c \Box_{\und{K}^c} \und{L}^c$ of
  $\xymatrix@1{\und{L}^c \Box \und{K}^c\Box \und{L}^c \ar@<0.5ex>[r] \ar@<-0.5ex>[r] &   \und{L}^c \Box \und{L}^c}$ 
  is isomorphic to the coequalizer of
  $\xymatrix@1{\und{(L \otimes K \otimes L)}^c \ar@<0.5ex>[r] \ar@<-0.5ex>[r] &   \und{(L \otimes L)}^c}$ 
  and this is nothing but $\und{(L \otimes_K L)}^c$. The multiplication map
  \[ \und{L}^c \Box_{\und{K}^c} \und{L}^c \ra \und{L}^c\]
  is induced by the multiplication map on $L$ over $K$ and hence its kernel is $\und{I}^c$ where $I$ denotes the kernel of the multiplication map on $L$ over $K$. As $K \subset L$ is \'etale, $I/I^2=0$, and hence $\Omega^{1,G}_{\und{L}^c|\und{K}^c} =0$.   
\end{proof}

\begin{rem}
This result can be generalized to \'etale extensions of rings $R \ra A$ as long as $\und{A}$ is a flat $\und{R}$-module. 
\end{rem}

\begin{bibdiv}
  \begin{biblist}

    \bib{birch}{incollection}{
      author={Birch, Bryan John},
      title={Cyclotomic fields and Kummer extensions}, 
     booktitle    = {Algebraic Number Theory, Cassels, J.~W.~S. and Fr\"ohlich, A.},
  year         = {2010}, 
 pages        = {85--93},
publisher    = {London Mathematical Society},
  note         = {Second Edition},

    }

\bib{hillaq}{article}{
AUTHOR = {Hill, Michael A.},
     TITLE = {On the {A}ndr\'{e}-{Q}uillen homology of {T}ambara functors},
   JOURNAL = {J. Algebra},
     VOLUME = {489},
      YEAR = {2017},
     PAGES = {115--137},
}

 \bib{hm}{article}{
    AUTHOR={Hill, Michael A.},
    AUTHOR={Mazur, Kristen},
TITLE={An equivariant tensor product on Mackey functors}, 
JOURNAL={J. Pure Appl. Algebra},
YEAR={2019},
ISSUE= {12},
PAGES={5310--5345},
}
  
  \bib{hmq}{article}{
    AUTHOR={Hill, Michael A.},
    AUTHOR={Mehrle, David},
    AUTHOR={Quigley, James D.},
TITLE={Free Incomplete Tambara Functors are Almost Never Flat}, 
JOURNAL={International Mathematics Research Notices},
YEAR={2023},
ISSUE= {5},
PAGES={4225--4291},
}

\bib{lang}{book}{
    AUTHOR = {Lang, Serge},
     TITLE = {Algebra},
   EDITION = {revised third edition},
 PUBLISHER = {Springer-Verlag, New York, Grad. Texts in Math. 211},
      YEAR = {2002},
     PAGES = {xvi+914},
}
     
  \bib{leeman}{misc}{
  AUTHOR = {Leeman, Ethan Jacob},
   TITLE = {Andr\'e-Quillen (co)homology and equivariant stable homotopy
theory}, 
      YEAR = {2019},
NOTE={Dissertation, The University of Texas at Austin
}, 
    }

\bib{lewis}{inproceedings}{
AUTHOR = {Lewis, L. Gaunce},
  TITLE = {The {RO}({G})-Graded Equivariant Ordinary Cohomology of Complex Projective Spaces with Linear {$\mathbb{Z}/p$} Actions},
  booktitle = {Algebraic {Topology} and {Transformation Groups}},
  date = {1988},
  pages = {53--122},
  publisher = {Springer Berlin Heidelberg},
}
    
\bib{LewisPiF}{article}{
    AUTHOR = {Lewis, L. Gaunce},
     TITLE = {When projective does not imply flat, and other homological
              anomalies},
   JOURNAL = {Theory Appl. Categ.},
  FJOURNAL = {Theory and Applications of Categories},
    VOLUME = {5},
      YEAR = {1999},
     PAGES = {No. 9, 202--250},
}

    \bib{lrz}{misc}{
      AUTHOR = {Lindenstrauss, Ayelet},
      AUTHOR = {Richter, Birgit},
      AUTHOR = {Zou, Foling},
      TITLE ={Loday constructions for Tambara functors},
      NOTE = {preprint, 	arXiv:2304.01656}}

    \bib{mazur}{misc}{
      AUTHOR={Mazur, Kristen},
      TITLE={On the structure of Mackey functors and Tambara functors}, 
      YEAR = {2013}, 
      NOTE={Dissertation, University of Virginia}, 
}

\bib{strickland}{misc}{
  AUTHOR = {Strickland, Neil},
   TITLE = {Tambara functors}, 
      NOTE={preprint, arXiv:1205.2516}, 
    }

\bib{zeng}{misc}{
      title={Equivariant Eilenberg-Mac Lane spectra in cyclic $p$-groups}, 
      author={Zeng, Mingcong},
      note={preprint, arXiv:1710.01769}
}

\end{biblist}
\end{bibdiv}

\end{document}